\theoremstyle{plain}
\newtheorem{theorem}{Theorem}
\newtheorem{lemma}[theorem]{Lemma}
\newtheorem{proposition}[theorem]{Proposition}
\theoremstyle{definition}
\newtheorem{definition}[theorem]{Definition}
\theoremstyle{remark}
\newtheorem{remark}[theorem]{Remark}
\def\d#1{{#1\kern-0.4em\char"16\kern-0.1em}}
\def\D#1{{\raise0.2ex\hbox{-}\kern-0.4em #1}}
\newcounter{zd}
\newcounter{zdr}[subsection]
\newcommand{\eps}{\varepsilon}
\def\Ldws#1{{{\rm L}^{2}_{{\rm w^\ast}}(#1)}}
\def\ve{\varepsilon}
\def\F{{\cal F}}
\def\pa{\partial}
\def\ve{\varepsilon}
\def\cal{\mathcal}
\let\mib=\boldsymbol
\def\ae#1{\;(\hbox{\rm a.e. } #1)}
\def\aps#1{\big|{#1}\big|}
\def\Apslr#1{\left|{#1}\right|}
\def\R{{\bf R}}
\def\Z{{\bf Z}}
\def\N{{\bf N}}
\def\Cbc#1{{{\rm C}^{\infty}_{c}(#1)}}
\def\CB#1{{{\rm C}_b(#1)}}
\def\Cc#1{{{\rm C}_{{\rm c}}(#1)}}
\def\Cnl#1{{{\rm C}_0(#1)}}
\def\Cp#1{{{\rm C}(#1)}}
\def\pC#1#2{{{\rm C}^{#1}(#2)}}
\def\dv{{\sf div\thinspace}}
\def\dscon{\relbar\joinrel\rightharpoonup}
\def\dstr{\longrightarrow}
\def\Dup#1#2{\langle#1,#2\rangle}
\def\Dupp#1#2{\Bigl\langle#1,#2\Bigr\rangle}
\def\eps{\varepsilon}
\def\Lb#1{{{\rm L}^\infty(#1)}}
\def\Ld#1{{{\rm L}^{2}(#1)}}
\def\Ldc#1{{{\rm L}^{2}_{{\rm c}}(#1)}}
\def\Ldl#1{{{\rm L}^{2}_{{\rm loc}}(#1)}}
\def\Ll#1#2{{{\rm L}^{#1}_{{\rm loc}}(#2)}}
\def\LLd{{{\rm L}^2}}
\def\LLb{{{\rm L}^{\infty}}}
\def\LLp#1{{{\rm L}^{#1}}}
\def\Lj#1{{{\rm L}^{1}(#1)}}
\def\pL#1#2{{{\rm L}^{#1}(#2)}}
\def\pLc#1#2{{{\rm L}^{#1}_{{\rm c}}(#2)}}
\def\malpha{{\mib \alpha}}
\def\mbeta{{\mib \beta}}
\def\meta{{\mib \eta}}
\def\mkappa{{\mib \kappa}}
\def\mmu{{\mib \mu}}
\def\mzeta{{\mib \zeta}}
\def\msnop{{\bf p}}
\def\mq{{\bf q}}
\def\mx{{\bf x}}
\def\mxi{{\mib \xi}}
\def\my{{\bf y}}
\def\mB{{\bf B}}
\def\Nor#1{\| #1 \|}    
\def\nor#1#2{{\| #1 \|}_{#2}}   
\def\oi#1#2{\langle#1,#2\rangle}
\def\Pd{{\rm P}}
\def\ph{\varphi}
\def\Postoji#1{\left(\exists\,#1\right)}
\def\povlaci{\quad\Longrightarrow\quad}
\def\povrhsk#1{\smash{
        \mathop{\;\Dscon\;}\limits^{#1}}}
\def\Rd{{{\bf R}^{d}}}
\def\Rm{{{\bf R}^{m}}}
\def\vnul{{\sf 0}}
\def\Rpl{{{\bf R}^{+}}}
\def\str{\longrightarrow}
\def\Svaki#1{\left(\forall\,#1\right)}
\def\ve{{\sf e}}
\def\vf{{\sf f}}
\def\vu{{\sf u}}
\def\W#1#2#3{{{\rm W}^{#1,#2}(#3)}}
\def\Wc#1#2#3{{{\rm W}_{{\rm c}}^{#1,#2}(#3)}}
\def\Wl#1#2#3{{{\rm W}_{{\rm loc}}^{#1,#2}(#3)}}
\def\WW#1#2{{{\rm W}^{#1,#2}}}
\begin{document}
\title[]{Velocity averaging -- a general framework}
\author{ Martin Lazar}
\address{ Martin Lazar, University of Dubrovnik, Department of Electrical Engineering, 20000  Dubrovnik, Croatia}
 \email{  martin.lazar@unidu.hr} \date{}
 \author{ Darko Mitrovi\'c}
\address{ Darko Mitrovi\'c, University of Montenegro, Faculty of Mathematics, Cetinjski put bb, 81000 Podgorica, Montenegro}
 \email{  matematika@t-com.me}
 \date{}

\begin{abstract}
We prove that the sequence of averaged quantities
$\int_{\R^m}u_n(\mx,\msnop)$ $\rho(\msnop)d\msnop$,  is strongly precompact in
$\Ldl\Rd$, where $\rho\in \Ldc{\R^m}$, and $u_n\in
 \Ld{\R^m; \pL s\Rd}$, $s\geq 2$, are weak solutions to differential
operator equations with variable coefficients. In particular, this
includes differential operators of hyperbolic, parabolic or
ultraparabolic type, but also fractional differential operators. If
$s>2$ then the coefficients can be discontinuous with respect to the
space variable $\mx\in \R^d$, otherwise, the coefficients are
continuous functions. In order to obtain the result we prove a
representation theorem for an extension of the H-measures.
\end{abstract}

\subjclass{35K70, 42B37, 46G10}

\keywords{velocity averaging, generalised H-measures, ultraparabolic equations, discontinuous coefficients, entropy solutions}

\maketitle

\section{Introduction}

The main subject of the paper is the following sequence of
equations:

\begin{equation}
\label{main-sys}
\begin{split}
{\cal P}u_n(\mx,\msnop)&=\sum\limits_{k=1}^d\pa^{\alpha_k}_{x_k}
\left(a_k(\mx,\msnop) u_n(\mx,\msnop)\right)=\pa^\mkappa_\msnop G_n(\mx,\msnop),
\end{split}
\end{equation} where $u_n$ are weak solutions to
\eqref{main-sys} such that $u_n\dscon 0$ in $\Ld{\Rm; \pL s{\Rd}}$, $s\geq 2$, while:

\begin{itemize}
\item [\bf a)] $\alpha_k> 0$ are real numbers and $\pa^{\alpha_k}_{x_k}$ are (the Fourier)
multiplier operators with the symbols $(2 \pi i\xi_k)^{\alpha_k}$,
$i^{\alpha_k}:=e^{\frac{i \alpha_k \pi}{2}}$, $k=1,\dots,d$;

\item [\bf b)]

$$
a_k\in \begin{cases} \Ld{\Rm;\CB{\R^d}}, & s=2\\
\Ld{\Rm; \pL r{\Rd}}, &  2/s+1/r=1, \quad s>2,
\end{cases}
$$
where $\CB\Rd$ stands for a space of continuous and bounded functions;

\item [\bf c)]
$\pa_\msnop^\mkappa=\pa^{\kappa_1}_{p_1}\dots \pa^{\kappa_m}_{p_m}$ for a
multi-index $\mkappa=(\kappa_1,\dots,\kappa_m)\in \N^m$, and
\begin{equation*}
G_n\to 0 \ \ {\rm in} \ \
\Ld{\R^m;\W{-\malpha}{s'}\Rd}, \ \ \malpha=(\alpha_1,\dots,\alpha_d),
\end{equation*} where $\W{-\malpha}{s'}\Rd$
is a dual of $\W{\malpha}{s}\Rd=\{u\in \pL s\Rd: \pa_k^{\alpha_k}u\in \pL s\Rd,  k=1,\dots,d \}$  (for details on anisotropic Sobolev spaces see e.g.~\cite{VP}).

\end{itemize}

Equations \eqref{main-sys} involve the space variable $\mx\in \R^d$,
with respect to which we have derivatives of solutions $(u_n)$, and
the variable $\msnop\in \R^m$, which is usually called the velocity
variable.

Notice that if $\alpha_k\in \N$ then equation \eqref{main-sys} is a
standard partial differential equation. In particular, for
$\alpha_1=\dots=\alpha_d=1$ one gets a transport equation
(considered in e.g. \cite{Ger, Per}; see more detailed discussion
below). In general, we have a linear fractional differential
equation.

First, we introduce a definition of a weak solution to
\eqref{main-sys}. Assume for the moment that the sub-index $n$ is
removed in \eqref{main-sys}.

\begin{definition}
\label{weaksol} We say that a function $u\in \Ld{\Rm; \pL s{\Rd}}$ is a
weak solution to \eqref{main-sys} if for every $g\in
\Wc{|\mkappa|}2{\R^m;\W{\malpha}{s}\Rd}$ it holds
\begin{equation}
\begin{split}
\label{defws}
&\int\limits_{\R^{m+d}}\!\sum\limits_{k=1}^d
a_k(\mx,\msnop) u(\mx,\msnop)\overline{(-{\pa}_{x_k})^{\alpha_k}(g(\mx,\msnop))}d\mx
d\msnop
= (-1)^{|\mkappa|}\!\!\int\limits_{\R^{m}}\!\Dupp{ G(\cdot,\msnop)}
{\overline{\pa^{\mkappa}_\msnop g(\cdot,\msnop)}} d\msnop\,,
\end{split}
\end{equation}
where duality on  $\W{\malpha}{s}\Rd$ is considered.
\end{definition}

In this paper, we are concerned with compactness properties of
sequence $(u_n)$. It is not difficult to find examples of
equations of type \eqref{main-sys} such that the sequence $(u_n)$
does not converge strongly in $\Ll s{\R^m\times\R^d}$ for any
$s\geq 1$. Indeed, a trivial example $u_n=\sin n\msnop$ solving
\eqref{main-sys} with coefficients being independent of $\mx\in \R^d$
and $\alpha_k\in \N$, $k=1,\dots,d$, does not converge strongly in
${\rm L}^s_{loc}$ for any $s\geq 1$.

Still, from the viewpoint of applications, it is almost always enough
to analyse the sequence $(u_n)$ averaged with respect to the
velocity variable $(\int_{\R^m}\rho(\msnop)u_n(\mx,\msnop)d\msnop)$, $\rho\in
\Cc{\R^m}$ (see e.g. famous papers \cite{8, LPT}) which, as
firstly noticed by Agoshkov \cite{Ago} in the homogeneous hyperbolic
case, can be strongly precompact in $\Ll s\Rd$ for an
appropriate $s\geq 1$ even when the sequence $(u_n(\mx,\msnop))$ is not. Such
results are usually called velocity averaging lemmas.

After Agoshkov's paper, the investigations in this directions
continued rather intensively. Still, in most of the previous works
on the subject, the symbol $P(i\mxi, \mx, \msnop)$ of the
differential operator ${\cal P}$ was of the first order and
independent of $\mx\in \R^d$. Thus the corresponding equation
describes a transport process occurring in a homogeneous medium. On
the other hand, most of natural phenomena take place in
heterogeneous media (flow in heterogeneous porous media,
sedimentation processes, blood flow, gas flow in a variable duct,
etc). However, it appears that it is much more complicated to work
on heterogeneous transport equations than on homogeneous ones.

This fact could be explained by the following simple observation.
Assume that the coefficients in \eqref{main-sys} do not depend on
$\mx\in \R^d$. If we apply the Fourier transform in $\mx\in \R^{d}$ on
equation \eqref{main-sys}, at least informally, we can separate
solutions $(u_n)$ and the known coefficients. To be more precise, let us
consider the sequence of homogeneous transport equations from
\cite{Per}:

\begin{equation}
\label{perth} \pa_t u_n + a(\msnop)\cdot \nabla_\mx
u_n=\sum\limits_{j=1}^d\pa_{x_j} \pa_\msnop^\mkappa g^n_j, \ \ (t, \mx, \msnop)\in
\R^+\times\R^d\times\R^d,
\end{equation} where, for some $s>1$, $u_n\rightharpoonup 0$ weakly in $\pL s{\R^{d+1}}$, while $g^n_j\to 0$ strongly in
$\Ll s{\R^+\times \R^d\times \R^d}$,  $j=1,\dots,d$. The function
 $a:\R^d\to \R^d$ is   continuous.

By finding the Fourier transform of \eqref{perth} with respect to
$(t,\mx)\in \R^+\times \R^d$ (denoted by $\hat{}$ below), we conclude
from the above
\begin{equation*}
(\tau+a(\msnop)\cdot \mxi)\hat{u}=\sum\limits_{j=1}^d \xi_j \pa_\msnop^\mkappa
\hat{g}_j,
\end{equation*} and from here, for any $\beta>0$,
\begin{equation*}
\hat{u}=\frac{\beta^2|\mxi|^2 \hat{u}+\sum\limits_{j=1}^d
 (\tau+a(\msnop)\cdot \mxi) \xi_j \pa_\msnop^\mkappa \hat{g}_j}{(\tau+a(\msnop)\cdot
\mxi)^2+\beta^2 |\mxi|^2}.
\end{equation*}
As the term containing $\hat{u}$ on the right-hand side can be
controlled by constant $\beta$, it
was proved in \cite{Per} that the sequence of averaged quantities
$(\int_{\R^m} \rho(\msnop)u_n(t,\mx,\msnop)d\msnop)$, $\rho\in \pL{s'}{\R^m}$,
$1/s+1/s'=1$, converges to zero strongly in $\pL s{\R^{d+1}}$.

Actually, such framework is probably the main approach used on the
subject \cite{15, 20, Gol, Tao}. Other approaches include the use
of wavelet decomposition \cite{16}, ``real-space methods" in time
\cite{5, 52} and ``real-space methods" in space using the Radon
transform \cite{12, 54}, $X$-transform \cite{31}, duality based
dispersion estimates \cite{26}, etc.

In the heterogeneous case, the method applied on \eqref{perth} is
not at our disposal (since $\hat u$ can not be separated). Probably the only possible way to
tackle the heterogeneous velocity averaging problem is through a
variant of defect measures \cite{Ant2, Ger, MI, Pan_IHP, Tar}. In
\cite[Theorem 2.5]{Ger} the concrete application of defect measures
on the averaging lemmas can be found. The result from \cite{Ger}
claims that the sequence of solutions $(u_n)$ of equations
\eqref{main-sys} satisfying conditions a)--c) with
$\alpha_1=\alpha_2=\dots=\alpha_d\in \N$ and $s=2$,
is such that the sequence of averaged quantities $(\int
u_n(\mx,\msnop)\rho(\msnop)d\msnop)$ strongly converges to zero in $\Ld\Rd$.

In this paper, we shall generalise Gerard's result on a wider class
of equations, and we shall allow the coefficients to be
discontinuous if the solutions $u_n$ are from
$\Ld{\R^m; \pL s\Rd}$ for $s>2$ (which is the situation in a
numerous applications; e.g. \cite{BKT, Mitr, pan_jms}). We remark again
that the result from \cite{Per} can be applied only in the case of
homogeneous transport equations, but it is optimal in the sense that
a sequence of solutions can belong to $\pL s{\R^{d+1}\times\R^d}$ for
any $s>1$ (in the current contribution, we must have $s\geq 2$).

Let us now describe defect measures that we are going to use. A
defect measure is an object describing loss of compactness of a
family of functions. Originally, the notion of the defect measure
was systematically studied for sequences satisfying elliptic
estimates by P.L.Lions \cite{Liocc}. Since elliptic estimates
automatically eliminate oscillations, the defect measures used in
\cite{Liocc} were not appropriate enough for studying loss of
compactness caused by oscillations,  which typically appear in
the case of e.g. hyperbolic problems.

In order to control oscillations, a natural idea was to introduce an
object which distinguishes oscillations of different frequencies.
The idea was formalised by P.~Gerard \cite{Ger} and independently by
L.~Tartar \cite{Tar}. P.~Gerard named the appropriate defect measure
as the microlocal defect measure (mdm in the sequel), while
L.~Tartar used the term H-measure. Let us
recall Tartar's theorem introducing the H-measures.

\begin{theorem}\cite{Tar}
\label{H-meas}
If $(\vu_n)=((u_n^1,\dots, u_n^r))$ is a sequence in $\Ld{\R^d;\R^r}$
such that $\vu_n\rightharpoonup 0$ in $\Ld{\R^d;\R^r}$, then there
exists its subsequence $(\vu_{n'})$ and a positive definite matrix of
complex Radon measures $\mmu=\{\mu^{ij}\}_{i,j=1,\dots,r}$ on
$\R^d\times S^{d-1}$ such that for all $\varphi_1,\varphi_2\in
C_0(\R^d)$ and $\psi\in C(S^{d-1})$

\begin{equation}
\label{basic1}
\begin{split}
\lim\limits_{n'\to \infty}\int_{\R^d}&
{\cal A}_\psi(\varphi_1 u^i_{n'})(\mx)\overline{(\varphi_2
u^j_{n'})(\mx)}d\mx
=\langle\mu^{ij},\varphi_1\overline{\varphi_2}\psi
\rangle\\&= \int_{\R^d\times
S^{d-1}}\varphi_1(\mx)\overline{\varphi_2(\mx)}\psi(\mxi)d\mu^{ij}(\mx,\mxi),
\ \ i,j=1,\dots,r,
\end{split}
\end{equation}
where ${\cal A}_\psi$ is a multiplier operator with symbol $\psi\in C(S^{d-1})$ (see Definition \ref{multiplier}).
\end{theorem}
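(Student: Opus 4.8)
The plan is to reproduce Tartar's original argument \cite{Tar}, whose engine is a commutation lemma for Fourier multipliers. For $\psi\in C(S^{d-1})$ write $\A_\psi$ for the operator defined by $\widehat{\A_\psi f}(\mxi)=\psi(\mxi/|\mxi|)\hat f(\mxi)$, which is bounded on $\Ld{\R^d}$ with norm $\leq\|\psi\|_{{\rm L}^\infty(S^{d-1})}$, and for $a\in\Cnl{\R^d}$ write $M_a$ for multiplication by $a$. The first step is to prove the \emph{commutation lemma}: for every $a\in\Cnl{\R^d}$ and $\psi\in C(S^{d-1})$ the commutator $[M_a,\A_\psi]\od M_a\A_\psi-\A_\psi M_a$ is a compact operator on $\Ld{\R^d}$. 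Since $\|[M_a,\A_\psi]\|\leq 2\|a\|_\infty\|\psi\|_{{\rm L}^\infty(S^{d-1})}$ and the compact operators form a closed subspace of the bounded operators on $\Ld{\R^d}$, it suffices to take $a\in\Cbc{\R^d}$ and $\psi\in\pC\infty{S^{d-1}}$ and then pass to the operator-norm limit; for such smooth symbols one computes the kernel of the commutator on the Fourier side and shows it is square-integrable, using the rapid decay of $\hat a$ together with the smoothness and degree-zero homogeneity of the extension of $\psi$ off the origin, so that the commutator is in fact Hilbert--Schmidt. I expect this step to be the main obstacle, being the only genuinely delicate piece of harmonic analysis in the proof.

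Next I would set $L^{ij}_n(\varphi_1,\varphi_2,\psi)\od\int_{\R^d}\A_\psi(\varphi_1 u^i_n)(\mx)\,\overline{(\varphi_2 u^j_n)(\mx)}\,d\mx$. The sequence $(\vu_n)$ is bounded in $\Ld{\R^d;\R^r}$, say $\|\vu_n\|\leq C$, so boundedness of $\A_\psi$ gives the uniform estimate $|L^{ij}_n(\varphi_1,\varphi_2,\psi)|\leq C^2\|\varphi_1\|_\infty\|\varphi_2\|_\infty\|\psi\|_{{\rm L}^\infty(S^{d-1})}$. Using separability of $\Cnl{\R^d}$ and of $C(S^{d-1})$, fix countable dense subsets and, by a diagonal argument, extract a subsequence $(n')$ along which $L^{ij}_{n'}$ converges at every triple drawn from the countable dense family; the uniform estimate then upgrades this, by equicontinuity, to convergence of $L^{ij}_{n'}(\varphi_1,\varphi_2,\psi)$ for all $\varphi_1,\varphi_2\in\Cnl{\R^d}$ and $\psi\in C(S^{d-1})$, the limit $L^{ij}$ being a trilinear form obeying the same bound.

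Now I would use the commutation lemma to identify $L^{ij}$. Writing $\overline{\varphi_2 u^j_n}=\overline{\varphi_2}\,\overline{u^j_n}$ and inserting $M_{\overline{\varphi_2}}\A_\psi=\A_\psi M_{\overline{\varphi_2}}+[M_{\overline{\varphi_2}},\A_\psi]$,
\[
L^{ij}_n(\varphi_1,\varphi_2,\psi)=\int_{\R^d}\A_\psi\big(\varphi_1\overline{\varphi_2}\,u^i_n\big)\,\overline{u^j_n}\,d\mx+\int_{\R^d}[M_{\overline{\varphi_2}},\A_\psi](\varphi_1 u^i_n)\,\overline{u^j_n}\,d\mx .
\]
Since $\varphi_1 u^i_n\rightharpoonup 0$ in $\Ld{\R^d}$ and $[M_{\overline{\varphi_2}},\A_\psi]$ is compact, the last term tends to $0$ (a compact operator sends a weakly null sequence to a strongly null one, tested against the bounded sequence $\overline{u^j_n}$), so $L^{ij}(\varphi_1,\varphi_2,\psi)$ depends on $(\varphi_1,\varphi_2)$ only through the product $\varphi_1\overline{\varphi_2}$. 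For $\phi\in\Cnl{\R^d}$ with $\phi\geq 0$ put $B^{ij}(\phi,\psi)\od\lim_{n'}\int_{\R^d}\A_\psi(\phi u^i_{n'})\,\overline{u^j_{n'}}\,d\mx$ (the limit exists, equalling $L^{ij}(\sqrt\phi,\sqrt\phi,\psi)$ up to the vanishing commutator term), and extend $B^{ij}$ to complex $\phi$ by linearity, decomposing $\phi$ into the four non-negative functions obtained from the positive and negative parts of $\mathrm{Re}\,\phi$ and $\mathrm{Im}\,\phi$; well-definedness is immediate from additivity of the integrand, and the uniform estimate yields $|B^{ij}(\phi,\psi)|\leq 4C^2\|\phi\|_\infty\|\psi\|_{{\rm L}^\infty(S^{d-1})}$. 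Thus $B^{ij}\colon\Cnl{\R^d}\times C(S^{d-1})\to\C$ is a bounded bilinear form; since finite sums $\sum_k\phi_k\otimes\psi_k$ are dense in $\Cnl{\R^d\times S^{d-1}}$ by the Stone--Weierstrass theorem, $B^{ij}$ extends to a bounded linear functional on $\Cnl{\R^d\times S^{d-1}}$, hence by the Riesz representation theorem is integration against a unique complex Radon measure $\mu^{ij}$ on $\R^d\times S^{d-1}$ of finite total variation. Polarisation in $(\varphi_1,\varphi_2)$ then recovers $\langle\mu^{ij},\varphi_1\overline{\varphi_2}\,\psi\rangle=L^{ij}(\varphi_1,\varphi_2,\psi)=\lim_{n'}L^{ij}_{n'}(\varphi_1,\varphi_2,\psi)$, which is precisely \eqref{basic1}.

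Finally, for the positive-definiteness: given $\lambda=(\lambda_1,\dots,\lambda_r)\in\C^r$ and $\phi,\psi\geq 0$, Plancherel's identity gives $\sum_{i,j}\lambda_i\overline{\lambda_j}\,L^{ij}_{n'}(\sqrt\phi,\sqrt\phi,\psi)=\int_{\R^d}\psi(\mxi/|\mxi|)\,\big|\sum_i\lambda_i\,\widehat{\sqrt\phi\,u^i_{n'}}(\mxi)\big|^2\,d\mxi\geq 0$, and letting $n'\to\infty$ gives $\sum_{i,j}\lambda_i\overline{\lambda_j}\,\langle\mu^{ij},\phi\psi\rangle\geq 0$ for all $\phi,\psi\geq 0$, whence (approximating indicators of rectangles) $\{\mu^{ij}\}$ is a positive matrix of measures; the Hermitian symmetry $\mu^{ji}=\overline{\mu^{ij}}$ follows from the same computation applied to $\overline{L^{ij}_n(\sqrt\phi,\sqrt\phi,\psi)}$ with real-valued $\psi$, using $\A_\psi^\ast=\A_\psi$.
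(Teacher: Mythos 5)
Your overall architecture is Tartar's original one, and the paper itself offers no proof to compare against: Theorem~\ref{H-meas} is quoted from \cite{Tar}, and the only related argument in the text is the proof of the $\Pd$-variant (Theorem~\ref{tbasic1}), which after the commutation lemma concludes via the Schwartz kernel theorem and the Schwartz theorem on non-negative distributions rather than via a Riesz-representation argument on the product space. That said, two steps of your proposal do not hold up as written. First, the commutator $[M_a,{\cal A}_\psi]$ is \emph{not} Hilbert--Schmidt when $d\geq 2$, even for $a\in\Cbc\Rd$ and $\psi\in\pC\infty{S^{d-1}}$: its Fourier-side kernel is $K(\mxi,\meta)=\bigl(\psi(\mxi/|\mxi|)-\psi(\meta/|\meta|)\bigr)\widehat a(\mxi-\meta)$, and on the strip $|\mxi-\meta|\leq R$ the symbol difference only gains a factor of order $R/|\meta|$, so $\iint |K|^2\,d\mxi\,d\meta$ behaves like $\int_{|\meta|\geq 1}|\meta|^{-2}\,d\meta$, which diverges for $d\geq 2$. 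The standard repair (Tartar's) is to split the kernel into the region $|\meta|\leq M$, which does give a Hilbert--Schmidt operator, and the region $|\meta|>M$, whose operator norm is $O(1/M)$ by Schur's test; compactness then follows by letting $M\to\infty$, and your reduction by density and closedness of the compacts takes over from there.

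Second, the passage from the bounded bilinear form $B^{ij}$ on $\Cnl\Rd\times \Cp{S^{d-1}}$ to a Radon measure on $\R^d\times S^{d-1}$ is not automatic. The uniform bound $|B^{ij}(\phi,\psi)|\leq C\|\phi\|_\infty\|\psi\|_\infty$ controls $\sum_k B^{ij}(\phi_k,\psi_k)$ only through $\sum_k\|\phi_k\|_\infty\|\psi_k\|_\infty$ (a projective tensor norm), not through $\|\sum_k\phi_k\otimes\psi_k\|_{{\rm L}^\infty(\R^d\times S^{d-1})}$, so Stone--Weierstrass density of such sums does not by itself produce a continuous extension to $\Cnl{\R^d\times S^{d-1}}$. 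What rescues the construction is precisely the positivity you postpone to the end: the Plancherel identity shows that each diagonal form $B^{ii}$ is non-negative on pairs of non-negative test functions, hence defines a positive Radon measure $\mu^{ii}$ on the product, and the off-diagonal limits are then dominated through the Cauchy--Schwarz estimate $|\langle\mu^{ij},\varphi_1\overline{\varphi_2}\psi\rangle|\leq\langle\mu^{ii},|\varphi_1|^2\psi\rangle^{1/2}\langle\mu^{jj},|\varphi_2|^2\psi\rangle^{1/2}$, which identifies them as complex measures absolutely continuous with respect to the trace $\sum_i\mu^{ii}$. (Alternatively one argues as the paper does for Theorem~\ref{tbasic1}, through the kernel theorem and \cite[Theorem I.V]{Sw}.) With the positivity moved up front and the commutation lemma proved by the split above, the rest of your argument --- the diagonal extraction, the reduction to dependence on $\varphi_1\overline{\varphi_2}$ via compactness of the commutator, and the verification of Hermitian symmetry --- is correct.
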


G\' erard's approach generalises the above results to $\LLd$-sequences
taking values in an infinite-dimensional,  separable Hilbert space
$H$. In the case when $H=\Ld{\R^m}$, G\' erard's mdm is an object
belonging to ${\cal M}_+(S^\star \Omega, {\cal L}^1(H))$, i.e. to
the space of non-negative Radon measures on the cospherical bundle $S^\star
\Omega$ (the set $\Omega\times S^{d-1}$ endowed with the natural
structure of manifold) with values in the space of trace class
operators on $H$. It is important to mention an extension of
H-measures in the case of sequences which (basically) have the
form $({\rm sgn}(\lambda-u_n(\mx)))$, $u_n\in \Lb\Rd$, given by Panov
\cite{pan_ms}. There, it was proved that
 for almost every $\lambda_1,\lambda_2\in \R$ there exists
a measure $\mu^{\lambda_1\lambda_2}$  defined by \eqref{basic1} for
$u^{\lambda_i}(\mx)={\rm sgn}(\lambda_i-u_n(\mx))$, $i=1,2$.  This
notion appeared to be very useful, and it was successfully applied
in many recent papers \cite{AM, AMP, HKM, MI, Pan_IHP, pan_arma,
sazh}. Here, we extend Panov's results to sequences belonging to
$\Ld{\R^m; \pL s\Rd}$, $s\geq 2$.

Moreover, our result represents a generalisation of the original
H-measures from two aspects. First, test functions (in applications
these are given by coefficients entering equations of interest) in
our case can be more general, even discontinuous with respect to the
space variable. Second, our generalisation of the H-measures is
constructed for use on a large class of equations (unlike original
H-measures \cite{Ger, Tar} which were adapted only for hyperbolic
type problems).

In a view of the last observation, remark that parabolic \cite{Ant1,
Ant2}, and ultra-parabolic \cite{Pan_IHP} variants of the
H-measures, and finally the H-measures adapted to large class of
manifolds \cite{MI} were introduced. The last one is the main tool
used in this paper. Its description, as well as  the introduction to
the main result (Theorem \ref{main-result}) is given in the next
section.

In Section 3 we shall further develop the H-measure concept, which will be used
in Section 4 for proving the precompactness property of a sequence of
solutions to \eqref{main-sys}. The proof is based on  a special (trivial) form of the variant H-measure corresponding to the
sequence $(u_n)$.

In Section 5 we shall apply our result on ultra-parabolic equations with discontinuous flux under different assumptions on coefficients than the ones from \cite{pan_jms} (which is the most up-to-date result and which comprises the results from \cite{pan_arma}).

\section{Statement of the main result}

To formulate the main result of the paper, we need to introduce the
variant of H-measures that we are going to use. First, we need
some auxiliary notions.

\begin{definition}
\label{multiplier} A multiplier operator ${\cal A}_\psi:\Ld\Rd\to
\Ld\Rd$ associated to a bounded function $\psi\in \CB\Rd$ (see
e.g. \cite{ste}), is a mapping  by
$$
{\cal A}_\psi(u)=\bar{\F}(\psi \hat{u}),
$$where $\hat{u}(\mxi)=\F(u)(\mxi)=\int_{\R^d}e^{-2\pi i \mx\cdot
\mxi}u(x)dx$ is the Fourier transform while $\bar{\F}$ (or $^\vee$) is
the inverse Fourier transform.

If the multiplier operator ${\cal
A}_\psi$ satisfies
$$
\nor{{\cal A}_\psi (u)}{\LLp p} \leq C \nor{ u}{\LLp p}, \qquad u\in \pL p\Rd\cap\Ld\Rd,
$$
where $C$ is a positive constant, then
the function $\psi$ is called the $\LLp p$-multiplier.
\end{definition}

Let $l$ be a minimal number such that $l \alpha_k > d$ for each $k$.
We shall introduce the following manifolds, denoted by $\Pd$ and
determined by the order of the derivatives from \eqref{main-sys}:
\begin{equation}
\label{kin2} \Pd=\{\mxi\in \R^d: \; \sum\limits_{k=1}^d
|\xi_k|^{l\alpha_k}=1 \}.
\end{equation}
On such manifolds, which are
smooth according to the choice of $l$,  we shall define the necessary
H-measures. Remark that it can seem more natural to take $\Pd=\{\mxi\in
\R^d: \; \sum\limits_{k=1}^d |\xi_k|^{\alpha_k}=1 \}$ but the latter
manifold is not smooth enough. Namely, we shall need the following
corollary of the Marzinkiewicz multiplier theorem \cite[Theorem IV.6.6']{ste}:
\begin{lemma}
\label{m1} Suppose that $\psi\in \pC{d}{\R^d\backslash \{\vnul\}}$ is such that for some constant
$C>0$ it holds
\begin{equation}
\label{c-mar} |\mxi^\mbeta \partial^\mbeta \psi(\mxi)|\leq C, \ \ \mxi\in
\R^d\backslash \{\vnul\}
\end{equation} for every multi-index
$\mbeta=(\beta_1,\dots,\beta_d)\in\Z_+^d$ such that
$|\mbeta|=\beta_1+\beta_2+\dots+\beta_d \leq d$. Then, the
function $\psi$ is an $\LLp p$-multiplier for $p\in \oi 1\infty$, and the operator norm of ${\cal A}_\psi$ depends only on $C, p$ and $d$.
\end{lemma}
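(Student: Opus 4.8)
The plan is to deduce Lemma~\ref{m1} from the Marcinkiewicz multiplier theorem as quoted in \cite[Theorem IV.6.6']{ste}, whose hypotheses require bounds on certain mixed ``dyadic-rectangle'' variations of $\psi$ rather than the single-variable scaling bounds \eqref{c-mar}. So the first step is to recall precisely the form of the Marcinkiewicz condition: for each choice of variables $x_{j_1},\dots,x_{j_r}$ (with $1\le j_1<\dots<j_r\le d$), the supremum over dyadic rectangles $R=\{2^{h_k}\le |\xi_{j_k}|\le 2^{h_k+1}\}$ of the integral $\int_R |\partial_{\xi_{j_1}}\cdots\partial_{\xi_{j_r}}\psi(\mxi)|\,d\xi_{j_1}\cdots d\xi_{j_r}$ must be bounded by a constant independent of the rectangle. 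The goal is to show that \eqref{c-mar} implies all these conditions with a constant depending only on $C$ and $d$.

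The key step is an elementary estimate: for a fixed subset of indices and a fixed dyadic box in those variables, apply \eqref{c-mar} with the multi-index $\mbeta=\mathbf{e}_{j_1}+\dots+\mathbf{e}_{j_r}$ (which has $|\mbeta|=r\le d$, so the hypothesis applies) to get $|\partial_{\xi_{j_1}}\cdots\partial_{\xi_{j_r}}\psi(\mxi)|\le C\,|\xi_{j_1}|^{-1}\cdots|\xi_{j_r}|^{-1}$ on $\R^d\setminus\{\vnul\}$. Integrating this over the dyadic rectangle $R$, each factor contributes $\int_{2^{h_k}}^{2^{h_k+1}}|\xi_{j_k}|^{-1}\,d\xi_{j_k}=\log 2$, so the whole integral is at most $C(\log 2)^r\le C(\log 2)^d$, a bound independent of the rectangle and of the remaining (frozen) variables. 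Since $\psi\in\pC d{\R^d\setminus\{\vnul\}}$, these mixed derivatives exist and are continuous off the axes, which is all that is needed (the axes form a null set for the relevant integrations). Summing the contributions of all $\binom{d}{r}$ choices of index-subsets and all $r\le d$ yields the Marcinkiewicz constant as a function of $C$ and $d$ only.

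Having verified the hypotheses, the conclusion of \cite[Theorem IV.6.6']{ste} is exactly that $\psi$ is an $\LLp p$-multiplier for every $p\in\oi1\infty$, with operator norm of ${\cal A}_\psi$ bounded in terms of $p$, $d$, and the Marcinkiewicz constant, hence in terms of $p$, $d$, and $C$; this is precisely the asserted dependence. I would also note that the derivatives $\partial^\mbeta\psi$ appearing in \eqref{c-mar} should be read as the iterated partial derivatives in the distinct coordinates indexed by the support of $\mbeta$ (so that in fact only multi-indices $\mbeta\in\{0,1\}^d$ are relevant for matching the Marcinkiewicz condition), and larger multi-indices in \eqref{c-mar} are not needed for this proof.

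The main obstacle is essentially bookkeeping rather than analysis: one must be careful that the version of the Marcinkiewicz theorem being invoked (Stein's \cite[Theorem IV.6.6']{ste}) asks only for the $2^d$-many mixed one-dimensional-derivative bounds over dyadic rectangles and not for some stronger global condition, and that passing from the pointwise bound $|\partial_{\xi_{j_1}}\cdots\partial_{\xi_{j_r}}\psi|\le C\prod|\xi_{j_k}|^{-1}$ to the integrated-over-rectangle bound is uniform in the frozen coordinates. Once the statement of the cited theorem is pinned down, the rest is the short computation above.
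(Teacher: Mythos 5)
Your proof is correct and matches the paper's intent exactly: the paper offers no written proof of Lemma~\ref{m1}, presenting it as a corollary of the Marcinkiewicz multiplier theorem \cite[Theorem IV.6.6']{ste}, and your argument is precisely the standard verification of that theorem's dyadic-rectangle hypotheses from the pointwise bounds \eqref{c-mar} (indeed only the multi-indices in $\{0,1\}^d$ are used, each factor contributing $\log 2$ per dyadic interval, uniformly in the frozen variables). Nothing is missing.
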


The next lemma is an easy corollary of Lemma \ref{m1}. First, denote
by
$$
\pi_{\Pd}(\mxi)=\left(\frac{\xi_1}{\left(\xi_1^{l\alpha_1}+\dots+\xi_d^{l\alpha_d}
\right)^{1/l
\alpha_1}},\dots,\frac{\xi_d}{\left(\xi_1^{l\alpha_1}+\dots+\xi_d^{l\alpha_d}
\right)^{1/l \alpha_d}}\right), \ \ \mxi\in \R^d\backslash\{0\},
$$
a projection of $\R^d\backslash \{\vnul\}$ on $\Pd$. The following result holds.

\begin{lemma}
\label{marz} For any $\psi\in \pC{d}\Pd$, the composition $\psi\circ
\pi_{\Pd}$ is an $\LLp p$-multiplier, $p\in \oi 1\infty$,  and the norm of the corresponding multiplier operator  depends on $\nor \psi {\pC{d}\Pd},\, p$ and $d$.
\end{lemma}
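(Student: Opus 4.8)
We need to show that $\psi \circ \pi_{\Pd}$ is an $\LLp p$-multiplier. The natural plan is to verify that the function $\Psi := \psi \circ \pi_{\Pd}$, defined on $\R^d \backslash \{\vnul\}$, satisfies the hypothesis \eqref{c-mar} of Lemma \ref{m1}, i.e. that $|\mxi^\mbeta \partial^\mbeta \Psi(\mxi)| \leq C$ for all multi-indices $\mbeta$ with $|\mbeta| \leq d$, with $C$ controlled by $\nor{\psi}{\pC d \Pd}$, $p$ and $d$; then Lemma \ref{m1} delivers the conclusion directly.

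The key structural observation is that each component $\pi_{\Pd}^{(j)}(\mxi) = \xi_j / \bigl(\xi_1^{l\alpha_1}+\dots+\xi_d^{l\alpha_d}\bigr)^{1/(l\alpha_j)}$ is positively homogeneous of degree $0$: replacing $\mxi$ by $t\mxi$ with $t>0$ multiplies the numerator by $t$ and the denominator by $t^{l\alpha_j \cdot (1/(l\alpha_j))} = t$. Hence $\pi_{\Pd}$ maps $\R^d\backslash\{\vnul\}$ onto $\Pd$ and is homogeneous of degree $0$, so $\Psi = \psi \circ \pi_{\Pd}$ is itself homogeneous of degree $0$. For a function homogeneous of degree $0$, each partial derivative $\partial^\mbeta \Psi$ is homogeneous of degree $-|\mbeta|$, which means $\mxi^\mbeta \partial^\mbeta \Psi$ is homogeneous of degree $0$; therefore it suffices to bound $|\mxi^\mbeta \partial^\mbeta \Psi|$ on the compact set $\Pd$ (or on the unit sphere $\Sdmj$), where finiteness is the only issue. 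To make this rigorous one should restrict attention to $\R^d\backslash\{\vnul\}$, note $\Psi$ is $\pC d$ there (it is a composition of the $\pC d$ map $\pi_{\Pd}$, which is smooth away from the coordinate hyperplanes and, more to the point, away from $\vnul$, with the $\pC d$ function $\psi$ on the smooth manifold $\Pd$ — smoothness of $\Pd$ being exactly why $l$ was chosen so that $l\alpha_k > d$), and chase the chain rule: $\partial^\mbeta \Psi$ is a finite sum of products of derivatives of $\psi$ (of order $\leq |\mbeta| \leq d$, hence bounded since $\psi \in \pC d \Pd$ and $\Pd$ is compact) with derivatives of the components of $\pi_{\Pd}$.

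The main obstacle, and the only place real work is needed, is the estimate on the derivatives of the components of $\pi_{\Pd}$: one must check that $|\mxi^\mbeta \partial^\mbeta \pi_{\Pd}^{(j)}(\mxi)|$ stays bounded on $\R^d\backslash\{\vnul\}$ for $|\mbeta|\leq d$. Since $\pi_{\Pd}^{(j)}$ is homogeneous of degree $0$, $\mxi^\mbeta\partial^\mbeta\pi_{\Pd}^{(j)}$ is again homogeneous of degree $0$, so boundedness reduces to smoothness on $\Sdmj$; the denominator $\xi_1^{l\alpha_1}+\dots+\xi_d^{l\alpha_d}$ is positive and $\pC d$ away from $\vnul$ precisely because $l\alpha_k>d$ ensures each power $|\xi_k|^{l\alpha_k}$ is $\pC d$ (this is the whole reason for inflating the exponents by the factor $l$ rather than working with $\sum|\xi_k|^{\alpha_k}=1$, which need not be smooth enough), so quotient and chain rules give the required bound, with constants depending only on $d$ and the $\alpha_k$. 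Combining this with the chain-rule expansion above yields \eqref{c-mar} for $\Psi$ with a constant $C = C(d, \malpha)\,\nor{\psi}{\pC d \Pd}$, and Lemma \ref{m1} then shows $\Psi$ is an $\LLp p$-multiplier for $p\in\oi 1\infty$ with operator norm depending only on $\nor{\psi}{\pC d \Pd}$, $p$ and $d$, as claimed. $\endmark$
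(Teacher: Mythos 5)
Your overall plan --- verify the Marcinkiewicz condition \eqref{c-mar} for $\psi\circ\pi_\Pd$ and invoke Lemma \ref{m1} --- is the same as the paper's, but the step you rest everything on is false. You claim each component $\pi_k(\mxi)=\xi_k\big/\bigl(\xi_1^{l\alpha_1}+\dots+\xi_d^{l\alpha_d}\bigr)^{1/(l\alpha_k)}$ is positively homogeneous of degree $0$ under the isotropic scaling $\mxi\mapsto t\mxi$, asserting that the denominator picks up a factor $t^{l\alpha_k\cdot(1/(l\alpha_k))}=t$. It does not: the denominator becomes $\bigl(\sum_j t^{l\alpha_j}\xi_j^{l\alpha_j}\bigr)^{1/(l\alpha_k)}$, and since the exponents $l\alpha_j$ differ, no single power of $t$ factors out. (Concretely, for $d=2$, $l\alpha_1=2$, $l\alpha_2=4$ one has $\pi_1(t\mxi)=\xi_1/(\xi_1^2+t^2\xi_2^4)^{1/2}\neq\pi_1(\mxi)$.) Isotropic $0$-homogeneity holds only when all $\alpha_k$ coincide, which is exactly the classical case the whole construction of $\Pd$ is designed to go beyond. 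Consequently your reduction of the bound on $\mxi^\mbeta\partial^\mbeta(\psi\circ\pi_\Pd)$ to continuity on the compact unit sphere $\Sdmj$ collapses, and with it the proof.

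The statement can still be rescued along the spirit of your argument, but with the correct invariance: $\pi_\Pd$ is invariant under the \emph{anisotropic} dilations $\xi_k\mapsto t^{1/(l\alpha_k)}\xi_k$ of \eqref{kin3}, and one checks that $\mxi^\mbeta\partial^\mbeta\pi_k$ is likewise invariant along these fibres, so its supremum over $\R^d\setminus\{\vnul\}$ equals its supremum over the compact cross-section $\Pd$, where continuity (guaranteed by $l\alpha_k>d$) gives finiteness. The paper avoids homogeneity altogether: it proves by induction on $|\mbeta|$ that $\partial^\mbeta\pi_k(\mxi)=\mxi^{-\mbeta}P_\mbeta(\pi_1(\mxi),\dots,\pi_d(\mxi))$ for a polynomial $P_\mbeta$, and since $|\pi_j(\mxi)|\leq 1$ everywhere (as $\sum_j|\pi_j(\mxi)|^{l\alpha_j}=1$), the bound \eqref{c-mar} for $\pi_k$ follows at once; the Fa\'a di Bruno formula then transfers it to $\psi\circ\pi_\Pd$. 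You should replace your homogeneity claim with one of these two arguments.
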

\begin{proof}
Due to the Fa\' a di Bruno formula, it is enough to prove that the conditions of Lemma \ref{m1} are
satisfied for
$\pi_k(\mxi)=\frac{\xi_k}{\left(\xi_1^{l\alpha_1}+\dots+\xi_d^{l\alpha_d}
\right)^{1/l \alpha_k}}$, $k=1,\dots,d$.

The statement  will be proved by the induction argument.

\begin{itemize}

\item $n=1$

In this case, we compute
$$
\pa_{j}\pi_k(\mxi)=\begin{cases}
-\frac{\alpha_j}{\alpha_k}\frac{1}{\xi_j}\pi_k(\mxi)\pi_j^{l \alpha_j}(\mxi), & j\neq k\\
-\frac{1}{\xi_k} \pi_k(\mxi) \left(1-\pi_k^{l \alpha_k}(\mxi)
\right), & j=k.
\end{cases}
$$ and it obviously holds $|\xi_j \pa_{j} \pi_k(\mxi)| \leq C$.

\item $n=m$

Our inductive hypothesis is
\begin{equation}
\label{ih} \partial^\mbeta
\pi_k(\mxi)=\frac{1}{\mxi^\mbeta}P_\mbeta(\pi_1(\mxi),\dots,\pi_d(\mxi)),
\ \ |\mbeta|=m,
\end{equation} for a polynomial $P_\mbeta$.

\item $n=m+1$

To prove that \eqref{ih} holds for $|\mbeta|=m+1$ it is enough to
notice that $\mbeta=\ve_j+\mbeta'$, where
$|\mbeta'|=m$, and to notice
$$
\partial^\mbeta\pi_k(\mxi)=\partial_j \partial^{\mbeta'}
\pi_k(\mxi)=\partial_j\left(\frac{1}{\mxi^{\mbeta'}}P_{\mbeta'}(\pi_1(\mxi),\dots,\pi_d(\mxi))\right)
$$ and from here, repeating the procedure from the case $n=1$, we conclude that \eqref{ih} holds for $n=m+1$.

\end{itemize}

From here, \eqref{c-mar} immediately follows for $\pi_k$ and
consequently for $\psi\circ\pi_\Pd$.

\end{proof}

To proceed, we introduce a family of  curves
\begin{equation}
\label{kin3} \eta_k=\xi_k t^{1/l \alpha_k }, \ t\in\Rpl, \
\end{equation}
 by points $\mxi=(\xi_1,\dots,\xi_d)\in \Pd$.
They are disjoint and fibrate entire space $\R^d$. They
play the same role as the rays $\mxi/|\mxi|$ in the definition of
the H-measures. Moreover, we see that the curves \eqref{kin3}
respect the scaling given by the differential operator from
\eqref{main-sys}. Indeed, if we have the classical situation
$\alpha_k=1$, $k=1,\dots,d$, then curves \eqref{kin3} are rays and
we can use the classical H-measures \cite{Ger, Tar}.

%

The following theorem is essentially proved in \cite{MI}, but here we provide its more elegant proof based on the ideas of L. Tartar.

\begin{theorem}
\label{tbasic1}  For fixed $\alpha_k>0$, $k=1,\dots,d$, denote by
$\Pd$ the manifold given by \eqref{kin2}, and by $\pi_{\Pd}:\R^d\to
\Pd$ projection on the manifold $\Pd$ along the fibres \eqref{kin3}.
If $(\vu_n)=((u_n^1,\dots, u_n^r))$ is a sequence in $\Ld{\R^d;\R^r}$
such that $\vu_n {\buildrel {\rm L}^{2}\over\dscon} \;0$ (weakly), then there
exists its subsequence $(u_{n'})$ and a positive definite matrix of
complex Radon measures $\mmu=\{\mu^{ij}\}_{i,j=1,\dots,d}$ from ${\cal M}_{b}(\Rd\times \Pd)$ such that for all $\varphi_1,\varphi_2\in \Cnl\Rd$
and ${\psi}\in \Cp\Pd$
\begin{equation}
\label{basic1_new}
\begin{split}
\lim\limits_{n'\to \infty}\int_{\R^d}
&{\cal A}_{\psi_\Pd}(\varphi_1
u^i_{n'})(\mx)\overline{(\varphi_2
u^j_{n'})(\mx)}dx=\langle\mu^{ij},\varphi_1\overline{\varphi_2}\psi
\rangle\\&= \int_{\R^d\times
\Pd}\varphi_1(\mx)\overline{\varphi_2(\mx)}\psi(\mxi)d\mu^{ij}(\mx,\mxi), \
\ (\mx,\mxi)\in \R^d\times \Pd,
\end{split}
\end{equation}
where  ${\cal A}_{{\psi_\Pd}}$ is a multiplier operator with the symbol ${\psi_\Pd}:= \psi\circ \pi_\Pd$.

The measure $\mmu$ we call the ${\rm H}_\Pd$-measure corresponding to the
sequence $(\vu_n)$.
\end{theorem}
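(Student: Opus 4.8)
The plan is to follow Tartar's original strategy for constructing H-measures, adapted to the curved fibration \eqref{kin3} via the multiplier machinery of Lemma \ref{marz}. First I would reduce to the scalar bilinear functional: for fixed indices $i,j$ and fixed $\varphi_1,\varphi_2\in\Cnl\Rd$, consider the map
$$
\psi\longmapsto \limsup_{n\to\infty}\int_{\R^d}{\cal A}_{\psi_\Pd}(\varphi_1 u_n^i)(\mx)\,\overline{(\varphi_2 u_n^j)(\mx)}\,d\mx,
$$
defined initially for $\psi\in\pC d\Pd$ (where $\psi_\Pd=\psi\circ\pi_\Pd$ is a genuine $\LLd$-multiplier by Lemma \ref{marz}, indeed an $\LLp p$-multiplier for all $p$). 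The key analytic input is the boundedness estimate: since $(\varphi_1 u_n^i)$ is bounded in $\Ld\Rd$ and $\|{\cal A}_{\psi_\Pd}\|_{\Ld\Rd\to\Ld\Rd}\le C\|\psi\|_{\pC d\Pd}$ (and in fact, because multiplication by $\varphi_2$ localizes and the symbol is homogeneous of degree $0$ along the fibres, one gets control by $\|\psi\|_{\Cp\Pd}$ alone, not the full $\pC d$-norm — this is the first commutator step), the functional is bounded by $C\|\varphi_1\|\,\|\varphi_2\|\,\|\psi\|_{\Cp\Pd}$.

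Next I would establish the commutation/localization lemma that makes the $\Cp\Pd$-bound work: for $\psi\in\pC d\Pd$, the commutator $[\mathcal A_{\psi_\Pd},\varphi_2]$ — i.e. $\mathcal A_{\psi_\Pd}(\varphi_2 v)-\varphi_2\mathcal A_{\psi_\Pd}(v)$ — is a compact operator on $\Ld\Rd$ (this is the analogue of Tartar's first commutation lemma; it follows because $\widehat{\varphi_2}$ decays and $\psi_\Pd$ is uniformly continuous away from the origin and homogeneous of degree zero along the curves, so the symbol oscillates slowly relative to the frequency scale). Combined with the fact that $u_n\dscon 0$ weakly, this shows the bilinear functional depends on $\psi$ only through its restriction to $\Pd$ up to terms that vanish as $n\to\infty$, and that it extends continuously from $\pC d\Pd$ to all of $\Cp\Pd$, being bounded by $C(\varphi_1,\varphi_2)\|\psi\|_{\Cp\Pd}$. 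The same commutator argument gives symmetry $\mu^{ij}=\overline{\mu^{ji}}$ and, taking $\psi\ge 0$ and $i=j$, nonnegativity of the diagonal; positive definiteness of the whole matrix follows by applying the (by then constructed) limit to linear combinations $\sum_i \lambda_i u_n^i$.

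Then comes the diagonal extraction: the space $\Cnl\Rd\otimes\Cnl\Rd\otimes\Cp\Pd$ admits a countable dense subset (these spaces are separable), so by a Cantor diagonal argument I extract a subsequence $(u_{n'})$ along which the limit in \eqref{basic1_new} exists for every $\psi$ in a dense set and every $\varphi_1,\varphi_2$ in a dense set; the uniform bilinear bound then upgrades this to convergence for all admissible test functions, and defines a bounded trilinear form. Finally, for fixed $i,j$, this trilinear form on $\Cnl\Rd\times\Cnl\Rd\times\Cp\Pd$, being bounded, is represented (after checking it factors through $\varphi_1\overline{\varphi_2}\psi$ — which again uses the commutation lemma to see that the value depends only on the product, not the individual factors) by a complex Radon measure $\mu^{ij}\in\mathcal M_b(\R^d\times\Pd)$ via the Riesz representation theorem; total boundedness of $\mu^{ij}$ comes from the operator-norm bound on $\mathcal A_{\psi_\Pd}$ uniformly in $n$. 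The main obstacle is the commutation lemma on the curved manifold $\Pd$: one must verify that $\pi_\Pd$ pulls back to a symbol to which a Cordes–Kato–Mihlin type argument applies, and here Lemma \ref{marz} (via the Marcinkiewicz theorem, which is precisely why $l$ was chosen so that $\Pd$ is $\pC d$-smooth) does the heavy lifting, reducing the curved case to the flat Tartar argument; the rest is a routine transcription of Tartar's proof.
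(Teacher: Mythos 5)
Your proposal follows essentially the same route as the paper: verify that the symbol $\psi\circ\pi_\Pd$ satisfies the hypotheses of Tartar's first commutation lemma for the curved fibration \eqref{kin3}, then extract a subsequence and represent the resulting non-negative definite bilinear functionals by Radon measures (the paper invokes \cite[Lemma 28.2]{tar_book} together with the Schwartz kernel theorem and the theorem on non-negative distributions, where you use Riesz representation --- a cosmetic difference). One correction of emphasis: for the $\LLd$ theory the bound $\|{\cal A}_{\psi_\Pd}\|_{\LLd\to\LLd}=\|\psi\|_{\Cp\Pd}$ is immediate from Plancherel, and what actually makes the commutator $[{\cal A}_{\psi_\Pd},\varphi_2]$ compact is not Lemma \ref{marz}/Marcinkiewicz (which is what you say ``does the heavy lifting'') but the fact that $\nabla\pi_{\Pd}(\meta)\to 0$ as $|\meta|\to\infty$, so that $\psi\circ\pi_\Pd$ is slowly varying at infinity in the sense \eqref{fcl-o} required by the commutation lemma --- exactly the computation the paper carries out via the mean value theorem, your phrase ``oscillates slowly relative to the frequency scale'' being the right heuristic; Lemma \ref{marz} is needed only for the $\LLp{p}$, $p\neq 2$, estimates used later in the paper.
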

\begin{proof}
First, we shall prove that the fibration \eqref{kin3} satisfies
conditions of the variant of the first commutation lemma \cite[Lemma
28.2]{tar_book}. More precisely, we shall prove that any symbol
$\psi$  on the manifold $\Pd$ satisfies

\begin{equation}
\label{fcl-o}
\begin{split}
&\Svaki{ r, \eps \in \Rpl} \ \ \Postoji{M\in \Rpl} \ \ \\
&|\meta_1-\meta_2|\leq r, \; |\meta_1|, |\meta_2| > M \ \ \implies \ \
|\psi(\pi_{\Pd}(\meta_1))-\psi(\pi_{\Pd}(\meta_2))|\leq \eps,
\end{split}
\end{equation} where $\pi_{\Pd}$ is the projection on the manifold
$\Pd$ along the fibres \eqref{kin3}.

As  $\psi$ is an uniformly continuous on $\Pd$, it is enough to show that for fixed $r$ and $\eps$, the difference
$|\pi_{\Pd}(\meta_1)-\pi_{\Pd}(\meta_2)|$ is arbitrary small for $M$ large enough.
According to the mean value theorem
$$
|\pi_{\Pd}(\meta_1)-\pi_{\Pd}(\meta_2)|\leq |\nabla\pi_{\Pd}(\mzeta)| |\meta_1-\meta_2|,
$$
where $\mzeta = \vartheta \meta_1 + (1- \vartheta) \meta_2$ for some $\vartheta\in \oi 01$, and the statement follows as $\nabla \pi_\Pd (\meta)$ tends to zero when $|\meta|$ approaches infinity.

Now, we can use \cite[Lemma 28.2]{tar_book} to conclude that the
mappings
$$
(\varphi_1\overline{\varphi_2},\psi)\mapsto \lim\limits_{n'\to
\infty}\int_{\R^d}{\cal
A}_{\psi_\Pd}(\varphi_1 u^i_{n'})(\mx)\overline{(\varphi_2 u^j_{n'})(\mx)}d\mx, \ \ i,j=1,\dots,d,
$$ form a positive definite matrix of bilinear functionals on $\Cnl\Rd\times \Cp\Pd$. According to the Schwartz kernel theorem, the
functionals can be extended to a continuous linear functionals on
${\cal D}(\R^d\times \Pd)$. Due to its non-negative definiteness,
the Schwartz theorem on non-negative distributions \cite[Theorem
I.V]{Sw} provides its extension on the Radon measures.

\end{proof}

Notice that, using the Plancherel theorem, \eqref{basic1_new} can be
conveniently rewritten via the Fourier transform as follows:
\begin{equation*}
\begin{split}
\lim\limits_{n'\to \infty}\int_{\R^d}&\F(\varphi_1
u^i_{n'})(\mxi)\overline{\F(\varphi_2 u^j_{n'})(\mxi)}\, {\psi\circ\pi_\Pd}(\mxi)
d\mxi=\langle\mu^{ij},\varphi_1\overline{\varphi_2}\psi \rangle\\&=
\int_{\R^d\times
\Pd}\varphi_1(\mx)\overline{\varphi_2(\mx)}\psi(\mxi)d\mu^{ij}(\mx,\mxi).
\end{split}
\end{equation*}

Now, we can formulate the main theorem of the paper.

\begin{theorem}
\label{main-result}
 Assume that $u_n\dscon 0$  weakly in
$\Ld{\R^m;\pL s\Rd}\cap \Ld{\R^{m+d}}$, $s\geq 2$, where $u_n$ represent weak
solutions to \eqref{main-sys} in the sense of Definition
\ref{weaksol}.

Furthermore,  for $s=2$ we assume that for every $(\mx,\mxi)\in
\R^d\times\Pd$
\begin{equation}
\label{kingnl} A(\mx,\mxi,\msnop):=\sum\limits_{k=1}^d
a_k(\mx,\msnop)(2 \pi i\xi_k)^{\alpha_k}\not= 0 \quad\ae{ \msnop \in \Rm}\,.
\end{equation}
If $s>2$, the last assumption is reduced to almost every $\mx\in \R^d$ and every
$\mxi\in \Pd$.

Then, for any $\rho\in \pLc{2}{\R^m}$,
$$
\int_{\R^m}u_n(\mx,\msnop)\rho(\msnop)d\msnop
\str 0 \ \ \text{ strongly in $\Ldl\Rd$}.
$$

\end{theorem}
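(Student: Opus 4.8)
The plan is to apply the $\mathrm{H}_\Pd$-measure machinery of Theorem~\ref{tbasic1} to a vector sequence built from $(u_n)$ and the velocity averages, and then to use the equation \eqref{main-sys} to show that the relevant diagonal entry of the $\mathrm{H}_\Pd$-measure vanishes. Concretely, I would fix $\rho\in\pLc{2}{\R^m}$, set $v_n(\mx):=\int_{\R^m}u_n(\mx,\msnop)\rho(\msnop)\,d\msnop$, and note that $v_n\dscon 0$ weakly in $\Ldl\Rd$ by the weak convergence hypothesis on $u_n$ (testing against $\rho\otimes\varphi$). Since $v_n$ is bounded in $\Ld{\Rd}$ after localisation, it suffices to prove that $v_n\to 0$ strongly in $\Ld{\Rd}$ on every ball, and by the standard H-measure characterisation of strong convergence this is equivalent to showing that the $\mathrm{H}_\Pd$-measure $\nu$ associated (along a subsequence) to the scalar sequence $(v_{n'})$ is the zero measure.

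To identify $\nu$, I would introduce a companion sequence of ``$\msnop$-slices'': morally one wants the $\mathrm{H}_\Pd$-measure of the operator-valued / $\Ld{\R^m}$-valued sequence $(u_n(\cdot,\msnop))$, exactly the object that Section~3 is advertised to construct (``a special (trivial) form of the variant H-measure corresponding to the sequence $(u_n)$''). Assuming that extension, $\nu$ is obtained from the operator-valued measure by pairing with $\rho\otimes\rho$, so it is enough to show the operator-valued $\mathrm{H}_\Pd$-measure of $(u_n)$ is supported where $A(\mx,\mxi,\cdot)$ ``sees'' the velocity direction trivially, and then contract with $\rho$. The mechanism is the usual localisation principle: take the weak formulation \eqref{defws}, insert test functions of the form $g(\mx,\msnop)=\overline{{\cal A}_{\psi_\Pd}(\varphi^2 v_{n'})}(\mx)\,\rho(\msnop)$ (suitably regularised so that $g\in\Wc{|\mkappa|}2{\R^m;\W{\malpha}{s}\Rd}$), use that $\pa_{x_k}^{\alpha_k}$ corresponds on the Fourier side to the multiplier $(2\pi i\xi_k)^{\alpha_k}$, and pass to the limit. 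The right-hand side kills itself because $G_n\to 0$ in $\Ld{\R^m;\W{-\malpha}{s'}\Rd}$, while the left-hand side produces, after the commutation lemma is used to move $\psi_\Pd$ past the coefficients $a_k$, the pairing of the $\mathrm{H}_\Pd$-measure against the symbol $\mxi\mapsto A(\mx,\mxi,\msnop)\,\psi(\mxi)$. Choosing $\psi$ appropriately (a standard bootstrap: first get that the measure is supported on $\{A=0\}$, then use \eqref{kingnl} to see this set is null in $\msnop$) forces the measure to vanish.

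There are two genuinely technical points. The first is that the natural multiplier here, $\mxi\mapsto A(\mx,\mxi,\msnop)=\sum_k a_k(\mx,\msnop)(2\pi i\xi_k)^{\alpha_k}$, is homogeneous of a fractional, $k$-dependent degree along the fibres \eqref{kin3} rather than degree-zero, so one must first renormalise: divide by a fixed positive symbol that is homogeneous of the same anisotropic degree and nowhere zero (e.g. built from $\sum_k|\xi_k|^{l\alpha_k}$), and check via Lemma~\ref{marz} (the anisotropic Marcinkiewicz bound) that the resulting symbol is a genuine $\Ld{\Rd}$-, indeed $\pL p{\Rd}$-, multiplier, so that the first commutation lemma applies and the symbol restricted to $\Pd$ is what enters the $\mathrm{H}_\Pd$-measure. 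The second, and I expect the main obstacle, is handling the coefficients $a_k$ as multipliers in the $s>2$ (discontinuous-in-$\mx$) case: when $a_k\in\Ld{\R^m;\pL r{\Rd}}$ one cannot simply commute $a_k$ past ${\cal A}_{\psi_\Pd}$ using continuity, so one needs the $\pL p$-boundedness from Lemma~\ref{marz} together with a Hölder argument (the exponents $2/s+1/r=1$, $1/s+1/s'=1$ are tuned precisely so that the trilinear form $\langle a_k u_n,\ {\cal A}_{\psi_\Pd}(\varphi^2 v_n)\rangle$ converges and the commutator $[a_k,{\cal A}_{\psi_\Pd}]$ is negligible in the limit). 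Making this commutator estimate rigorous — essentially a variant of the first commutation lemma with an $\pL r$ multiplicative perturbation rather than a continuous one — is where the argument requires care, and it is precisely the improvement over \cite{Ger} that the abstract Section~3 theory is designed to supply; once it is in place, the localisation identity and the non-degeneracy condition \eqref{kingnl} finish the proof in the standard way.
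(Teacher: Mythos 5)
Your overall strategy is the paper's: renormalise the operator by an anisotropic antiderivative (your ``divide by a positive symbol homogeneous of the same degree along the fibres'' is exactly the paper's multiplier ${\cal I}$ with symbol $(1-\theta(\mxi))\bigl(\sum_k|\xi_k|^{l\alpha_k}\bigr)^{-1/l}$), insert a test function built from ${\cal I}\circ{\cal A}_{\psi_\Pd}$ applied to the localised solution into \eqref{defws}, pass to the limit to get the localisation identity $A\,d\mu=0$, and kill $\mu$ with \eqref{kingnl}. Two points, however, deviate in ways that matter.

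First, and this is the genuine gap: you make the $s>2$ case hinge on showing that the commutator $[a_k,{\cal A}_{\psi_\Pd}]$ is ``negligible in the limit'' for $a_k\in\Ld{\R^m;\pL r\Rd}$, i.e.\ on a first-commutation-lemma variant with an $\pL r$ multiplicative perturbation. No such compactness is available for merely $\pL r$ coefficients (the first commutation lemma genuinely uses continuity of the multiplicative factor), and the paper never proves or needs one. Instead, the rough coefficient is kept on the side of the bilinear form \emph{without} the Fourier multiplier: the left-hand side of \eqref{defws} with the paper's test function is literally $\int (a_k u_{n})(\mx,\msnop)\,\overline{{\cal A}_{\sigma_k}(\ph u_{n}(\cdot,\mq))(\mx)}\,\rho_1(\msnop)\rho_2(\mq)\,d\mx\,d\msnop\,d\mq$, which is the defining form of the ${\rm H}_\Pd$-measure with first test function $\varphi_1=a_k\in\Ld{\R^m;\pL r\Rd}$ and second test function continuous. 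The whole content of Theorem \ref{lfeb518-r} is that this bilinear form extends by density and H\"older (using $2/s+1/r=1$) to $\varphi_1\in\Ld{\R^m;\pL r\Rd}$ --- an extension of the admissible test-function class, not a commutator estimate. If you pursue the commutator route your argument stalls; you should restructure so that $a_k$ never has to pass through ${\cal A}_{\psi_\Pd}$.

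Second, a smaller omission: for $s>2$ the non-degeneracy \eqref{kingnl} holds only for a.e.\ $\mx$, so ``the measure is supported on $\{A=0\}$, which is null in $\msnop$'' does not immediately force $\mu=0$ --- you must also know that the $\R^d$-marginal of the scalar measure $\nu$ in the representation $\mu(\msnop,\mq,\cdot)=f(\msnop,\mq,\cdot)\nu$ is absolutely continuous with respect to Lebesgue measure (Theorem \ref{lfeb518-r} and Lemma \ref{lemma-slicing}), and then run the $\delta\to0$ truncation $\bar A/(|A|^2+\delta)$ with dominated convergence. Without the absolute continuity of the spatial projection, a.e.-$\mx$ non-degeneracy is not enough to conclude.
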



Before we continue, remark that the conditions of the theorem   can be relaxed
by assuming that $(u_n)$ is merely bounded in $\Ld{\R^m;\pL s\Rd}$, while $(G_n)$  strongly
precompact in $\Ld{\R^m;\W{-\malpha}{s'}\Rd}$.
In that case there exists a
subsequence $(u_{n'})$  such that for any $\rho\in
\pLc{2}{\R^m}$ the sequence $(\int_{\R^m}\rho(\msnop)u_{n'}(\mx,\msnop)d\msnop)$ converges
toward $\int_{\R^m}\rho(\msnop)u(\mx,\msnop)d\msnop$, where  $u$ denotes the weak limit of $(u_n')$.

\section{Auxiliary results}

In this section, we shall extend Theorem \ref{tbasic1} on sequences
with uncountable indexing. A similar procedure we used in the case
of the parabolic variant H-measures \cite{osijek}, and for the sake of completeness we reproduce some results here. These  will be substantially extended by Proposition
\ref{prop_repr} and Theorem \ref{lfeb518-r} containing   representation results of  ${\rm H}_\Pd$-measures associated to sequences of functions $u_n\in \Ld{\R^m;\pL s\Rd}$, $s>2$, which turn to be crucial for the proof of the main theorem.

Let us take an arbitrary sequence of functions $(u_n)$ in variables
$\mx\in \R^d$ and $\msnop\in\R^m$, weakly converging to zero in $\Ld{\R^m
\times \R^d}$. Introduce a regularising kernel $\omega\in
\Cbc{\R^m}$, where $\omega$ is a
non-negative smooth function with total mass one. For $k\in\N$ denote $ \omega_k(\msnop)=k^m\omega(k \msnop)$
and convolute it with $(u_n(\mx,\msnop))$  in $\msnop$:
\begin{equation*}
u_n^k(\mx,\msnop):=\Bigl(u_n(\mx,\cdot)\ast\omega_k\Bigr)(\msnop)=\int_{\R^m}
u_n(\mx,\my)\omega_k(\msnop-\my)d\my.
\end{equation*}
By the Young inequality functions $u_n^k$ are bounded in $\Ld{\R^{m+d}}$ uniformly with respect to both $k$ and $n$. Meanwhile, for every fixed $k$, sequence  of
functions $u_n^k(\cdot, \msnop)$ is bounded   in $\Ld\Rd$,
uniformly in $\msnop$, and  converges weakly to zero. Furthermore, $u^k_n$ are Lipschitz
continuous as functions from $\R^m$ to $\Ld\Rd$, with an
$n$-independent Lipschitz constant. Having all this in mind, we can
prove the following lemma.

\begin{lemma}
There exists a subsequence $ (u_{n'})$ of the sequence $(u_n)$,
and a family $\{\mu^{\msnop{\mq}}_{k}: \msnop,{\mq}\in\R^m\}$ of ${\rm H}_\Pd$-measures
on $\R^d\times \Pd$ such that for every $k\in \N$, $\varphi_i\in
C_0(\R^d)$, $i=1,2$, and $\psi\in C(\Pd)$:
\begin{equation}
\label{jan2718}
\begin{split}
\lim\limits_{n'}\int\limits_{\Rd}\Bigl({\cal A}_{\psi_\Pd} \,\ph_1
u^{k}_{n'}(\cdot, \msnop)\Bigr)(\mx)
&\,\overline{\ph_2(\mx)u^{k}_{n'}(\mx,\mq )}d\mx\\&=
\!\!\!\!\int\limits_{\Rd\times \Pd}\!\!\!\ph_1(\mx)\overline{\ph_2(\mx)}\psi(\mxi)d\mu^{\msnop\mq}_k(\mx,\mxi).\\
\end{split}
\end{equation}

\end{lemma}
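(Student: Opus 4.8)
The plan is to combine a diagonal extraction in the countable parameters with the already-established Theorem~\ref{tbasic1}, so that the remaining continuous parameters $\msnop,\mq$ are handled by a separability/density argument on the test-function side of the bilinear form.

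First I would fix $k\in\N$ and observe that the family $(u_n^k(\cdot,\msnop))_{\msnop\in\R^m}$ is, as noted just before the lemma, a family of functions bounded in $\Ld\Rd$ uniformly in $n$ and $\msnop$, depending Lipschitz-continuously on $\msnop$ with an $n$-independent constant, and converging weakly to $0$ in $\Ld\Rd$ for each fixed $\msnop$. Enumerate a countable dense set $\{\msnop_j:j\in\N\}\subset\R^m$. For the pair $(\msnop_j,\msnop_l)$ one may regard $(u_n^k(\cdot,\msnop_j),u_n^k(\cdot,\msnop_l))$ as a two-component $\Ld{\R^d;\R^2}$-sequence (passing through real and imaginary parts if one works over $\C$), and apply Theorem~\ref{tbasic1} to obtain an ${\rm H}_\Pd$-measure realising the limit of
$$
\int_{\R^d}{\cal A}_{\psi_\Pd}(\ph_1 u_n^k(\cdot,\msnop_j))(\mx)\,\overline{\ph_2(\mx)u_n^k(\mx,\msnop_l)}\,d\mx
$$
along a subsequence; the off-diagonal entry of the resulting $2\times2$ matrix of measures is the candidate $\mu_k^{\msnop_j\msnop_l}$. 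Since there are only countably many triples $(k,\msnop_j,\msnop_l)$, a standard diagonal argument produces a single subsequence $(u_{n'})$ along which \eqref{jan2718} holds simultaneously for all $k\in\N$ and all $\msnop,\mq$ in the countable dense set.

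Next I would extend from the dense set to all $\msnop,\mq\in\R^m$. The key estimate is that, for fixed $k$, the bilinear functional
$$
B^{\msnop\mq}_k(\ph_1\overline{\ph_2},\psi):=\lim_{n'}\int_{\R^d}{\cal A}_{\psi_\Pd}(\ph_1 u_{n'}^k(\cdot,\msnop))(\mx)\,\overline{\ph_2(\mx)u_{n'}^k(\mx,\mq)}\,d\mx
$$
depends continuously on $(\msnop,\mq)$, uniformly over test functions in a fixed bounded set; this follows from the $n$-independent Lipschitz continuity of $\msnop\mapsto u_n^k(\cdot,\msnop)$ in $\Ld\Rd$ together with the uniform $\Ld\Rd$ bound and the $\pL p$-boundedness of ${\cal A}_{\psi_\Pd}$ from Lemma~\ref{marz} (Cauchy--Schwarz gives $|B^{\msnop\mq}_k-B^{\msnop'\mq'}_k|\lesssim(|\msnop-\msnop'|+|\mq-\mq'|)$). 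Hence for arbitrary $\msnop,\mq$ one approximates by sequences $\msnop_j\to\msnop$, $\mq_l\to\mq$ from the dense set; the measures $\mu_k^{\msnop_j\msnop_l}$ form a Cauchy net in the total-variation-bounded sense (their pairings against a countable dense set of test functions in ${\cal D}(\R^d\times\Pd)$ converge uniformly), so they converge weakly-$\ast$ to a Radon measure $\mu_k^{\msnop\mq}$ which one checks is again an ${\rm H}_\Pd$-measure (positive semidefiniteness of the diagonal $2\times2$ blocks passes to the limit) and which realises \eqref{jan2718} for that $(\msnop,\mq)$. Finally one verifies \eqref{jan2718} holds along the already-chosen subsequence $(u_{n'})$ and not merely along a further one, by using the uniform-in-$n'$ continuity estimate to interchange the limit in $n'$ with the limit in $(\msnop_j,\msnop_l)\to(\msnop,\mq)$.

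The main obstacle I expect is precisely this interchange of limits: showing that the single diagonal subsequence extracted over the countable dense set actually works for every $(\msnop,\mq)$. This is where the uniform (in $n$ and in the test functions) Lipschitz/continuity control of $\msnop\mapsto u_n^k(\cdot,\msnop)$ is indispensable — without it one would only get, for each $(\msnop,\mq)$, a further subsequence, and no single subsequence serving all parameters at once. Everything else (applying Theorem~\ref{tbasic1}, the countable diagonal extraction, the Schwartz-kernel/positive-distribution machinery that is already packaged inside Theorem~\ref{tbasic1}) is routine once this equicontinuity in the velocity variable is in hand.
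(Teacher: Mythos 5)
Your proposal is correct and follows essentially the same route as the paper: apply Theorem~\ref{tbasic1} together with a diagonal extraction over a countable dense set of velocity pairs (and over $k$), then extend to arbitrary $(\msnop,\mq)$ by weak-$\ast$ compactness of the bounded family of measures, with the $n$-independent Lipschitz continuity of $\msnop\mapsto u_n^k(\cdot,\msnop)$ in $\Ld\Rd$ supplying exactly the uniform estimate needed to interchange the limits in $n'$ and in the approximating parameters. The only cosmetic difference is that you phrase the extension step as a Cauchy-net argument, whereas the paper extracts a weak-$\ast$ convergent subsequence of measures and then shows the limit is independent of the choice; both hinge on the same estimate.
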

\begin{proof}
According to Theorem \ref{tbasic1}, for fixed $\msnop,\mq\in \R^m$
and $k\in {\bf N}$, there exist a subsequence of $(u_n)$ and
corresponding complex Radon measure $\mu_{k}^{\msnop{\mq}}$ over
$\Rd\times \Pd$ such that \eqref{jan2718} holds. Using the
diagonalisation procedure, we conclude that for a countable dense
subset $D\times {D} \subset \R^m\times \R^{m}$ there exists a
subsequence $(u_{n'})\subset (u_n)$ such that \eqref{jan2718} holds
for every $(\msnop,{\mq})\in D\times {D}$ and every $k\in \N$.

Let us take an arbitrary $k\in {\bf N}$ and  $(\msnop,{\mq})\in
\R^m\times \R^{m}$. Let $(\msnop_m,{\mq}_m)$ be a sequence in
$D\times {D}$ converging to $(\msnop,{\mq})$. The sequence
$(\msnop_m,{\mq}_m)$ defines sequence of Radon measures
$(\mu_k^{\msnop_m {\mq}_m})$, which is bounded in ${\cal M}_b(\Rd\times \Pd)$, due to the bounds of $(u_n^{k})$ in
$\Lb{\R^m; \Ld\Rd}$. Therefore, there exists a complex Radon measure
${\mu}_k^{\msnop\mq}$ such that, along a subsequence,
$\mu_k^{\msnop_m {\mq}_m}\rightharpoonup {\mu}_k^{\msnop\mq}$. Thus for arbitrary test functions $\ph=\ph_1 \bar\ph_2$ and
$\psi$ we have:
\begin{equation}
\label{lema1}
\begin{split}
\int \ph(\mx)\psi(\mxi)\, d\mu_k^{\msnop {\mq}} (\mx, \mxi)
&=\lim_m \int \ph(\mx)\psi(\mxi) \,d\mu_k^{\msnop_m {\mq}_m}(\mx,\mxi)\\
&=\lim_m \lim\limits_{n'} V_{n'}^k (\msnop_m, \mq_m) ,\,
\end{split}
\end{equation}
where $V_n^k$ denotes the function  by
\begin{equation}
\label{Vnk} V_n^k (\msnop, \mq):= \int\limits_{\Rd}\Bigl({\cal
A}_{\psi_\Pd} \,\ph_1 u^{k}_{n}(\cdot, \msnop)\Bigr)(\mx)
\,\overline{\ph_2(\mx)u^{k}_{n}(\mx,\mq )} d\mx\,.
\end{equation}
On the other hand
\begin{equation*}
\begin{split}
V_{n'}^k (\msnop_m, \mq_m) - V_{n'}^k (\msnop, \mq)
&=V_{n'}^k (\msnop_m, \mq_m) - V_{n'}^k (\msnop, \mq_m) +  V_{n'}^k (\msnop, \mq_m) -V_{n'}^k (\msnop, \mq)\\
&\leq C(k)\Bigl(|\msnop_{m}-\msnop|_{\R^{m}}+
|{\mq}_{m}-{\mq}|_{\R^{m}}\Bigr),\,
\end{split}
\end{equation*}
where on the last step we combined the Cauchy-Schwartz inequality,
boundedness of the multiplier ${\cal A}_{\psi_\Pd}$ on $\Ld\Rd$, and the
Lipschitz continuity of the functions $u_n^k$. The constant
$C(k)$ appearing above is independent of $n'$, and we can exchange
limits in \eqref{lema1}. This actually means that the functional
$\mu^{\msnop \mq}_k$ does not depend on the defining subsequence (i.e. it is
well  for every $\msnop,\mq\in \R^m$), which completes the proof.
\end{proof}

Using the previous assertion, we prove the existence of
${\rm H}_\Pd$-measures associated to functions taking values in $\Ld{\R^m}$.
First, we need to recall a few basic notions of $\LLd$ functions taking values in an arbitrary Banach space $E$.

We say that $f: \R^m \to E'$ is weakly $\ast$ measurable if it is
measurable with respect to weak $\ast$ $\sigma(E', E)$ topology. The
dual of $\Ld{\R^m, E}$ corresponds to the Banach space
$\Ldws{\R^{m}; E'}$ of weakly $\ast$ measurable functions $f:
\R^{m}\! \to \!E'$  such that $\int_{\R^{m}} \nor{f(\mx)}{E'}^2 d\mx
<\infty$ (for details see \cite[ p. 606]{Ed}).

By taking $E=\Cnl{\Rd\times \Pd}$, the topological dual
of  $\Ld{\R^{2m}; \Cnl{\Rd\times \Pd}}$ corresponds to the
Banach space $\Ldws{\R^{2m}; {\cal M}_{b}(\Rd\times \Pd)}$ of
weakly $\ast$ measurable functions $\mu: \R^{2m}\! \to \!{\cal
M}_{b}(\Rd\times \Pd)$  such that  $\int_{\R^{2m}}
\Nor{\mu(\msnop, \mq)}^2  d\msnop d\mq <\infty$.

\begin{theorem}
\label{lfeb518} For the subsequence $(u_{n'})\subseteq (u_{n})$
extracted in Lemma 8, the\-re exists a  measure $\mu \in \Ldws{\R^{2m};
{\cal M}_{b}(\Rd\times \Pd)}$ such that for all $v\in
\Ldc{\R^{2m}}$, $\varphi_i\in C_0(\Rd)$, $i=1,2$, and $\psi\in
C(\Pd)$:
\begin{equation}
\label{jan0218}
\begin{split}
\lim\limits_{n'}\int\limits_{\R^{2m}}\int\limits_{\Rd}v(\msnop,{\mq})
&\Bigl({\cal A}_{\psi_\Pd} \,\ph_1 u_{n'}(\cdot, \msnop)\Bigr)(\mx)
\,\overline{\ph_2(\mx) u_{n'}(\mx,\mq )} d\mx d\msnop d\mq\\
&=\int\limits_{\R^{2m}} v(\msnop,{\mq})\,\langle \mu(\msnop,\mq,\cdot,\cdot),\varphi_1\bar{\varphi}_2 \otimes\psi\rangle d\msnop d\mq.\\
\end{split}
\end{equation}
\end{theorem}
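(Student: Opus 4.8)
\textbf{Proof proposal for Theorem \ref{lfeb518}.}
The plan is to upgrade the family $\{\mu_k^{\msnop\mq}\}$ from Lemma 8 (and then pass to the limit in $k$) to a single weakly-$\ast$ measurable vector measure, using the duality $\Ld{\R^{2m};E}' = \Ldws{\R^{2m};E'}$ with $E = \Cnl{\Rd\times\Pd}$ recalled just before the statement. First I would fix $k$ and consider, for $v\in\Ldc{\R^{2m}}$ and a test function $\Phi = \varphi_1\bar\varphi_2\otimes\psi \in E$, the bilinear functional
\[
(v,\Phi)\;\longmapsto\;\lim_{n'}\int_{\R^{2m}}\int_{\Rd} v(\msnop,\mq)\bigl({\cal A}_{\psi_\Pd}\varphi_1 u_{n'}^k(\cdot,\msnop)\bigr)(\mx)\,\overline{\varphi_2(\mx)u_{n'}^k(\mx,\mq)}\,d\mx\,d\msnop\,d\mq .
\]
By Lemma 8 the inner $\mx$-integral converges pointwise in $(\msnop,\mq)$ to $\langle\mu_k^{\msnop\mq},\varphi_1\bar\varphi_2\psi\rangle$; the integrands $V_{n'}^k(\msnop,\mq)$ defined in \eqref{Vnk} are uniformly bounded (by $C\,\nor{v}{\LLj{\R^{2m}}}$ times the bounds on $u_n^k$ and on ${\cal A}_{\psi_\Pd}$, using Cauchy--Schwartz in $\mx$ and boundedness of the multiplier on $\Ld\Rd$), so dominated convergence gives that the limit equals $\int_{\R^{2m}}v(\msnop,\mq)\langle\mu_k^{\msnop\mq},\Phi\rangle\,d\msnop\,d\mq$. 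This functional is bounded on $\Ld{\R^{2m};E}$: indeed $|\langle\mu_k^{\msnop\mq},\Phi\rangle|\le \Nor{\mu_k^{\msnop\mq}}\,\nor{\Phi}{E}$ and $(\msnop,\mq)\mapsto\Nor{\mu_k^{\msnop\mq}}$ is in $\LLd_{\rm loc}$ (in fact bounded, from the $\Lb{\R^m;\Ld\Rd}$ bound on $u_n^k$). Hence the Riesz-type identification of the dual produces $\mu_k\in\Ldws{\R^{2m};{\cal M}_b(\Rd\times\Pd)}$ with $\mu_k(\msnop,\mq,\cdot,\cdot)=\mu_k^{\msnop\mq}$ a.e., and weak-$\ast$ measurability of $(\msnop,\mq)\mapsto\mu_k^{\msnop\mq}$ against $E$ follows from the fact that it is a pointwise limit of the measurable maps $(\msnop,\mq)\mapsto V_{n'}^k(\msnop,\mq)$ (Pettis measurability, $E$ separable).

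Next I would remove the regularisation. The maps $k\mapsto\mu_k$ are bounded in $\Ldws{\R^{2m};{\cal M}_b(\Rd\times\Pd)}$ uniformly in $k$ (same bound as above, since $\nor{u_n^k}{\Lb{\R^m;\Ld\Rd}}\le\nor{u_n}{\Lb{\R^m;\Ld\Rd}}$ by Young's inequality, and the $\Ld{\R^{2m}}$ bounds are $k$-uniform). Since this space is the dual of the separable space $\Ld{\R^{2m};\Cnl{\Rd\times\Pd}}$, the Banach--Alaoglu theorem yields a weak-$\ast$ convergent subsequence $\mu_{k'}\dscon\mu$ in $\Ldws{\R^{2m};{\cal M}_b(\Rd\times\Pd)}$. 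It remains to check that this $\mu$ verifies \eqref{jan0218} for the \emph{un-regularised} sequence $u_{n'}$. For this I would estimate, for fixed test data,
\[
\Bigl|\int_{\R^{2m}}\!\!\int_{\Rd} v\,\bigl({\cal A}_{\psi_\Pd}\varphi_1 u_{n'}(\cdot,\msnop)\bigr)\overline{\varphi_2 u_{n'}(\cdot,\mq)}\,d\mx\,d\msnop\,d\mq
-\int_{\R^{2m}}\!\!\int_{\Rd} v\,\bigl({\cal A}_{\psi_\Pd}\varphi_1 u_{n'}^k(\cdot,\msnop)\bigr)\overline{\varphi_2 u_{n'}^k(\cdot,\mq)}\,d\mx\,d\msnop\,d\mq\Bigr|,
\]
which by Cauchy--Schwartz, boundedness of ${\cal A}_{\psi_\Pd}$ on $\Ld\Rd$, and the bounds on $\varphi_i$ is controlled by $C\,\nor{v}{\LLd}$ times $\bigl(\int_{\R^m}\nor{u_{n'}(\cdot,\msnop)-u_{n'}^k(\cdot,\msnop)}{\Ld\Rd}^2 d\msnop\bigr)^{1/2}$ plus a symmetric $\mq$-term. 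The key point is that this quantity tends to $0$ as $k\to\infty$ \emph{uniformly in $n'$}: since $\omega_k$ is an approximate identity, $\nor{u_n - u_n^k}{\Ld{\R^{m+d}}}\to0$ and one needs this uniformly in $n$; this holds because $(u_n)$ is bounded in $\Ld{\R^{m+d}}$ and — here I would invoke the uniform (in $n$) Lipschitz-continuity of $u_n^k$ as maps $\R^m\to\Ld\Rd$ noted in the text, or alternatively the fact that $u_n\dscon 0$ in $\Ld{\R^{m+d}}$ gives equi-integrability-type control — so a diagonal argument exchanges the limits $n'\to\infty$ and $k\to\infty$.

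The main obstacle I expect is precisely this last interchange: one must ensure that the regularisation error $\sup_n\nor{u_n - u_n^k}{\Ld{\R^{m+d}}}\to0$ as $k\to\infty$, i.e. an \emph{equicontinuity} of the translation-in-$\msnop$ of the family $(u_n)$ in $\Ld{\R^{m+d}}$, which does not follow from mere boundedness. I would obtain it from the structure already used for the lemma: the uniform-in-$n$ Lipschitz bound $\nor{u_n^k(\cdot,\msnop)-u_n^k(\cdot,\mq)}{\Ld\Rd}\le C(k)|\msnop-\mq|$ is not quite enough by itself (the constant depends on $k$), so instead I would argue directly: fix $\delta$, split $\R^m$ into the ball $B_R$ and its complement, use the $\Ld{\R^{m+d}}$ bound on the tail, and on $B_R$ use that convolution with $\omega_k$ converges to the identity in the strong operator topology on $\Ld{B_R\times\Rd}$ together with the fact that $\{u_n\}$ restricted to $B_R\times\Rd$ is bounded, hence its closed convex hull is weakly compact, to upgrade pointwise-in-the-kernel convergence to uniformity over the (relatively weakly compact) family. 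Once the uniform smallness of the regularisation error is in hand, passing $n'\to\infty$ in the regularised identity (valid by Lemma 8 and the $k$-fixed construction above), then $k\to\infty$, and matching with the weak-$\ast$ limit $\mu$ of $\mu_{k'}$, yields \eqref{jan0218} and completes the proof. $\endmark$
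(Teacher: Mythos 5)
Your construction of $\mu_k$ via the duality $\Ld{\R^{2m};E}'=\Ldws{\R^{2m};E'}$ and the extraction of a weak-$\ast$ limit $\mu_{k'}\dscon\mu$ by Banach--Alaoglu match the paper. The gap is in the step you yourself flag as the main obstacle: the interchange of the limits in $k$ and $n'$. The quantity you propose to control, $\sup_n\nor{u_n-u_n^k}{\Ld{\R^{m+d}}}$, does \emph{not} tend to zero for the sequences this theorem is meant to handle. The only hypothesis is $u_n\dscon 0$ weakly in $\Ld{\R^{m+d}}$, and the prototypical example is precisely a sequence oscillating in the velocity variable, e.g.\ $u_n(\mx,\msnop)=\chi(\mx)\sin(n\msnop)$ (localised); then $u_n\ast\omega_k\approx\widehat\omega(n/k)\,u_n$, so for each fixed $k$ one has $\nor{u_n-u_n^k}{\LLd}\to\nor{u_n}{\LLd}\neq0$ as $n\to\infty$. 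Your proposed repair --- that strong operator convergence of the mollification operators upgrades to uniform convergence over a bounded (or weakly compact) family --- is false: strong operator convergence is uniform only on norm-compact sets. The standard counterexample is $T_k=$ projection onto the first $k$ coordinates of $\ell^2$ and the orthonormal family $(e_n)$, which is bounded with weakly compact closed convex hull, yet $\Nor{T_ke_n-e_n}=1$ for $n>k$. So the interchange cannot be obtained along this route, and the intermediate claim it rests on is simply untrue.

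The paper's argument avoids this entirely by never asking for strong convergence of $u_n^k$ to $u_n$. Instead it compares the \emph{tested} quantities: writing out $u_n^k=u_n\ast_\msnop\omega_k$ in both slots of $V_n^k$ and using Fubini, the mollification is transferred onto the test function, giving $\int v\,V_n^k\,d\msnop d\mq=\int \tilde v_k\,V_n\,d\msnop d\mq$ with $\tilde v_k=v\ast(\check\omega_k\otimes\check\omega_k)$. Since $\aps{V_n(\msnop,\mq)}\leq C\nor{\varphi_1 u_n(\cdot,\msnop)}{\Ld\Rd}\nor{\varphi_2u_n(\cdot,\mq)}{\Ld\Rd}$ gives an $n$-uniform bound on $\nor{V_n}{\Ld{\R^{2m}}}$, the error is at most $C\nor{\tilde v_k-v}{\Ld{\R^{2m}}}$, which is independent of $n$ and tends to zero because $\omega_k$ is an approximate identity acting on the \emph{fixed} function $v$. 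That is the uniformity in $n$ the paper invokes (tersely) when it says the averaged quantities converge ``uniformly with respect to $n$''; it is a statement about $v$, not about equicontinuity of $(u_n)$. You should replace your final paragraph with this transfer-to-the-test-function argument; the rest of your proposal then goes through.
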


\begin{remark}
\label{rem1P} Notice that the new object has inherited the hermitian character of H-measures. Indeed, with the help of Plancherel's theorem, we
can rewrite \eqref{jan0218} as
\begin{equation*}
\begin{split}
\lim\limits_{n'}\int\limits_{\R^{2m}}\int\limits_{\Rd}v(\msnop,{\mq})
&\psi(\mxi) \F(\,\ph_1 u_{n'}(\cdot, \msnop))(\mxi)
\,\overline{\F(\ph_2(\cdot) u_{n'}(\cdot,\mq ))(\mxi)} d\mxi d\msnop d\mq\\
&=\int\limits_{\R^{2m}} v(\msnop,{\mq})\,\langle \mu(\msnop,\mq,\cdot,\cdot),\varphi_1\bar{\varphi}_2 \otimes\psi\rangle d\msnop d\mq\,,\\
\end{split}
\end{equation*}
from which it easily follows that
$$
\mu(\msnop,\mq,\cdot,\cdot)=\overline{\mu(\mq,\msnop,\cdot,\cdot)}\,.
$$
Also, notice that we can take $\varphi_1\in \CB\Rd$ (since $\varphi_2\in C_0(\R^d)$).

\end{remark}

\begin{proof}
For fixed test functions $\varphi_{1,2}$ and $\psi$, similarly to
\eqref{Vnk}, we denote
$$
F_k (\msnop, \mq):=\lim\limits_{n'} V_{n'}^k (\msnop,
\mq)=\langle\mu_k^{\msnop\mq},\varphi_1\bar{\varphi}_2\psi \rangle .
$$

Due to the uniform bound of  $u_n^{k}$ in $\Ld{\R^{m+d}}$, the
functions $V_n^k$ belong to the space $\Ld{\R^{2m}}$, with norm depending on $\nor{\varphi_{1,2}}{\LLb}$ and $\nor{\psi}{\LLb}$, but not on $n$ and $k$. Thus the Fatou
lemma asserts the sequence $(F_k)$ is bounded in $\Ld{\R^{2m}}$, as
well.

Furthermore, for a fixed $k$, the sequence $(V_n^k)$ is bounded in
$\Lb{\R^{2m}}$. By taking an arbitrary $v\in \Ldc{\R^{2m}}$, we have
\begin{equation}
\label{mar1918}
\begin{split}
\lim\limits_{k}\int\limits_{\R^{2m}} v(\msnop, \mq) F_k(\msnop, \mq)
d\msnop d\mq
&= \lim\limits_{k}\int\limits_{\R^{2m}} v(\msnop, \mq)\lim\limits_{n'} V_{n'}^k (\msnop, \mq) d\msnop d\mq\\
&= \lim\limits_{k} \lim\limits_{n'}\int\limits_{\R^{2m}}v(\msnop,
\mq) V_{n'}^k (\msnop, \mq) d\msnop d\mq\,
\end{split}
\end{equation}
where on the last step we have used the Lebesgue dominated convergence
theorem.

As the functions $u_n^k$ are uniformly bounded in $\Ld{\R^{m+d}}$,  the sequence of
averaged quantities  $\int\limits_{\R^{2m}}v(\msnop, \mq)
V_{n}^k (\msnop, \mq) d\msnop d\mq$ converges  to
$\int\limits_{\R^{2m}} v(\msnop, \mq) V_n (\msnop, \mq)
d \msnop d\mq$ uniformly with respect to $n$,
where $V_n$ is  similarly to $V_n^k$, with $u_n^{k}$ replaced by $u_n$ in \eqref{Vnk}.

Thus we can exchange  the limits in
\eqref{mar1918} providing
\begin{equation}
\label{jan2618} \lim\limits_{k}\int\limits_{\R^{2m}} v(\msnop, \mq)
F_k(\msnop, \mq) d\msnop d\mq
=\lim\limits_{n'}\int\limits_{\R^{2m}}v(\msnop, \mq) V_{n'} (\msnop,
\mq) d\msnop d\mq\,.
\end{equation}

On the other hand, the boundedness of $(F_k)$ in $\Ld{\R^{2m}}$
enables us to define a bounded sequence of operators $\mu_k\in
\Ldws{\R^{2m}; {\cal M}_{b}(\Rd\times \Pd)}$:
$$
\mu_k (\msnop, \mq)(\phi): =\Dup{\mu_k^{\msnop\mq}}{\phi}, \quad \phi \in \Cnl{\Rd\times \Pd}.
$$

Therefore, there exists  a subsequence $(\mu_{k'})\subseteq(\mu_k)$
such that $\mu_{k'}\def\Dscon{\relbar\joinrel\dscon} \povrhsk\ast \mu$ in  $\Ldws{\R^{2m}; {\cal
M}_{b}(\Rd\times \Pd)}$. By passing to the limit on the left side
of \eqref{jan2618}, we get the relation \eqref{jan0218}.

\end{proof}

\begin{remark}
\label{remark_3}

Notice that the last theorem remains valid in the case when the test
functions $\varphi_{1,2}$ depend on the velocity variable ($\msnop$
or $\mq$) as well, i.e. when $\varphi_{1,2}$ are taken from the
space $\Ldc{\R^{m};\Cnl\Rd}$ (with function  $v$ removed from \eqref{jan0218}). As it is enough to prove the statement for test functions from a dense set, we take arbitrary $\varphi_{1,2}\in \Ldc{\R^{m};\Cnl\Rd}$ compactly supported in $\mx$ and approximate them  by sums $\sum\limits_{l=1}^{N} v^l_1(\msnop)\varphi_1^l(\mx)$ and $\sum\limits_{j=1}^{N}v_2^j(\mq) \varphi_2^j(\mx)$ such that
\begin{align*}
&\|\sum\limits_{l=1}^{N} v^l_1\otimes\varphi_1^l-\varphi_1\|_{\Ld{\R^{m};\Cnl\Rd}}\leq 1/N,\\
&\|\sum\limits_{j=1}^{N}v_2^j\otimes \varphi_2^j -\varphi_2\|_{\Ld{\R^{m};\Cnl\Rd}}\leq 1/N.
\end{align*}
\vfill\eject

Then it holds for any $\psi\in \Cp\Pd$
\begin{align*}
&\Bigg|\int\limits_{\R^{2m}}\langle \mu(\msnop,\mq,\cdot,\cdot),\varphi_1(\cdot,\msnop)\bar{\varphi}_2(\cdot,\mq) \otimes\psi\rangle d\msnop d\mq\\&\qquad
-\lim\limits_{n'}\int\limits_{\R^{2m}}\int\limits_{\R^d}
\Bigl({\cal A}_{\psi_\Pd} \,(\ph_1 u_{n'})(\cdot, \msnop)\Bigr)(\mx)
\,\overline{(\ph_2 u_{n'})}(\mx,\mq ) d\mx d\msnop d\mq\;\Bigg|\\
&\leq \Bigg|\int\limits_{\R^{2m}}\Dupp{ \mu(\msnop,\mq,\cdot,\cdot)}{\varphi_1(\cdot,\msnop)\bar{\varphi}_2(\cdot,\mq) \otimes\psi
-\Bigl(\sum\limits_{l=1}^{N} v^l_1(\msnop)\otimes\varphi_1^l\Bigr) \Bigl(\sum\limits_{j=1}^{N} \overline{v_2^j(\mq)\otimes\varphi_2^j}\Bigr) \otimes\psi } d\msnop d\mq \Big|\\
&+\Bigg|\lim\limits_{n'}\int\limits_{\R^{2m}}\int\limits_{\R^d}
\Biggl(\sum\limits_{l,j}^{N} v^l_1(\msnop) \bar v_2^j(\mq)
\Bigl({\cal A}_{\psi_\Pd} \,\ph_1^l u_{n'}(\cdot, \msnop)\Bigr)(\mx)
\,\overline{(\ph_2^j u_{n'})}(\mx,\mq )
\\&\qquad\qquad\qquad- \Bigl({\cal A}_{\psi_\Pd} \,(\ph_1 u_{n'})(\cdot, \msnop)\Bigr)(\mx)
\,\overline{(\ph_2 u_{n'})}(\mx,\mq )\Biggr) d\mx d\msnop d\mq\Bigg|
\\&=\lim\limits_{n'}\Bigg|\int\limits_{\R^{2m}}\int\limits_{\R^d}
\sum\limits_{l,j}^{N}
\Bigl({\cal A}_{\psi_\Pd} \left(( v^l_1\varphi_1^l-\varphi_1) u_{n'}\right)(\cdot, \msnop)\Bigr)\!(\mx)
 \overline{\left(( v_2^j\varphi_2^j-\varphi_2)u_{n'}\right)}(\mx,\mq )d\mx d\msnop d\mq\Bigg|\\
&+
\lim\limits_{n'}\Bigg|\int\limits_{\R^{2m}}\int\limits_{\R^d}
\sum\limits_{l}^{N}
\Bigl({\cal A}_{\psi_\Pd} \left(( v^l_1\varphi_1^l-\varphi_1) u_{n'}\right)(\cdot, \msnop)\Bigr)(\mx)
\overline{(\ph_2 u_{n'})}(\mx,\mq )
d\mx d\msnop d\mq \Bigg|\\
&+
\lim\limits_{n'}\Bigg|\int\limits_{\R^{2m}}\int\limits_{\R^d}
\sum\limits_{j}^{N}
\Bigl({\cal A}_{\psi_\Pd} \,\ph_1 u_{n'}(\cdot, \msnop)\Bigr)(\mx)
 \overline{\left(( v_2^j\varphi_2^j-\varphi_2)u_{n'}\right)}(\mx,\mq )
d\mx d\msnop d\mq\Bigg|
+{\cal O}(1/N)\\&={\cal O}(1/N),
\end{align*}
which proves the remark.

\end{remark}

Now, we shall describe the object $\mu$  in Theorem
\ref{lfeb518} more precisely by showing that it can be represented
as
$\mu(\msnop, \mq,\cdot)=f(\msnop, \mq,\cdot)\nu$, where $\nu\in {\cal
M}_{b}(\R^d\times \Pd)$ is a positive Radon measure, and $f\in
\Ld{\R^{2m}; \Lj{\Rd\times \Pd:\nu}}$. If we could conclude
that for every $\phi\in \Cnl{\R^d\times \Pd}$, the function
$\Dup{\mu(\msnop, \mq, \cdot)}\phi$ represents a kernel of a trace class operator,
then we could rely on \cite[Proposition A.1.]{Ger} to state the
latter representation. The most famous sufficient condition for a
function to be a kernel of a trace class operator is given by the
Mercer theorem. It demands the kernel to be continuous, symmetric
and positive definite. The function $\Dup{\mu(\msnop, \mq, \cdot)}\phi)$ has the last
two properties, but it is not necessarily continuous. Therefore, we
need the following proposition.

\begin{proposition}
\label{prop_repr} The operator $\mu\in \Ldws{\R^{2m}; {\cal
M}_{b}(\Rd\times \Pd)}$  in Theorem \ref{lfeb518} has the
form
\begin{equation}
\label{repr_1} \mu(\msnop, \mq,\mx,\mxi)=f(\msnop, \mq,\mx,\mxi)\nu(\mx,\mxi),
\end{equation}
where $\nu\in {\cal M}_b(\R^d\times \Pd)$ is a non-negative scalar Radon measure, while
$f$ is a function from $\Ld{\R^{2m}; \Lj{\Rd\times \Pd:\nu}}$ satisfying
$$\int_{\R^{2m}}\int_{\R^d\times \Pd}\rho(\msnop)\bar \rho(\mq)\phi(\mx,\mxi)f(\msnop, \mq,\mx,\mxi)d\nu(\mx,\mxi) d\msnop d\mq\geq 0$$
for any $\rho\in \Ldc{\R^m}$, $\phi\in \Cnl{\R^d\times \Pd}$,
$\phi\geq 0$.
\end{proposition}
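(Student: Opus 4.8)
The plan is to extract the non-negative scalar control measure $\nu$ first, and then obtain $f$ as a Radon--Nikod\'ym-type density. Since $\mu\in\Ldws{\R^{2m};{\cal M}_b(\Rd\times\Pd)}$ is weakly $\ast$ measurable and, by the hermitian property noted in Remark \ref{rem1P}, satisfies $\mu(\msnop,\mq,\cdot)=\overline{\mu(\mq,\msnop,\cdot)}$, the natural candidate is the ``diagonal'' measure. For a countable dense family $\{\rho_l\}\subset\Ldc{\R^m}$ and a countable dense family $\{\phi_j\}\subset\Cnl{\R^d\times\Pd}$ with $\phi_j\ge 0$, consider the non-negative numbers obtained by testing \eqref{jan0218} with $v(\msnop,\mq)=\rho_l(\msnop)\overline{\rho_l(\mq)}$, $\varphi_1=\varphi_2=\varphi$, $\psi=1$: these equal $\lim_{n'}\int_{\R^d}\Aps{\int_{\R^m}\rho_l(\msnop)(\varphi u_{n'})(\mx,\msnop)d\msnop}^2 d\mx\ge 0$, which shows positivity of the quadratic form $\rho\mapsto\int_{\R^{2m}}\rho(\msnop)\overline{\rho(\mq)}\langle\mu(\msnop,\mq,\cdot,\cdot),|\varphi|^2\otimes\psi\rangle\,d\msnop d\mq$ for $\psi\ge 0$ (extending $\varphi$ to $\CB\Rd$ via Remark \ref{rem1P}). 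Letting $\varphi\nearrow 1$ on larger and larger balls, one defines a non-negative functional on $\Cnl{\R^d\times\Pd}$ by
$$
\nu(\phi):=\lim_{n'}\int_{\R^{2m}}\lambda(\msnop)\overline{\lambda(\mq)}\Big\langle\mu(\msnop,\mq,\cdot,\cdot),1\otimes\phi\Big\rangle d\msnop d\mq\qu\text{(suitably localised)},
$$
for a fixed strictly positive integrable weight $\lambda$; by the Riesz representation theorem $\nu$ is a non-negative scalar Radon measure on $\R^d\times\Pd$. (One must check $\nu$ is finite; this follows from the uniform $\Ld{\R^{m+d}}$-bound on $(u_n)$ together with the bounds on the functionals $V_n$ established in the proof of Theorem \ref{lfeb518}.)

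The second step is absolute continuity and the construction of $f$. For each fixed $(\msnop,\mq)$ outside a null set, and for the fixed weight $\lambda$, one shows that the complex measure $A\mapsto\mu(\msnop,\mq,\cdot)(A)$ is absolutely continuous with respect to $\nu$. Here the key mechanism is again the quadratic-form/Cauchy--Schwarz structure: for $\psi\ge 0$, $|\langle\mu(\msnop,\mq,\cdot,\cdot),\varphi_1\overline{\varphi}_2\otimes\psi\rangle|^2$ is bounded by the product of the diagonal quantities at $\msnop$ and at $\mq$, and after integrating against $\rho(\msnop)\overline{\rho(\mq)}$ and using the construction of $\nu$ this gives domination of $|\mu|$ by $\nu$ up to an $L^2(\R^{2m})$ factor. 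The Radon--Nikod\'ym theorem then yields, for $\nu$-a.e. $(\mx,\mxi)$ and a.e. $(\msnop,\mq)$, a density $f(\msnop,\mq,\mx,\mxi)$ with $\mu(\msnop,\mq,\mx,\mxi)=f(\msnop,\mq,\mx,\mxi)\nu(\mx,\mxi)$; measurability of $f$ jointly in all variables follows by carrying out the disintegration along the countable dense families and passing to the limit, exactly as in the diagonalisation arguments used in Lemma 8 and Theorem \ref{lfeb518}. The bound $f\in\Ld{\R^{2m};\Lj{\R^d\times\Pd:\nu}}$ comes from $\int_{\R^{2m}}\Nor{\mu(\msnop,\mq)}^2 d\msnop d\mq<\infty$ together with $\Nor{\mu(\msnop,\mq)}=\int|f(\msnop,\mq,\cdot)|\,d\nu$.

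The positivity statement
$$
\int_{\R^{2m}}\int_{\R^d\times\Pd}\rho(\msnop)\bar\rho(\mq)\phi(\mx,\mxi)f(\msnop,\mq,\mx,\mxi)\,d\nu(\mx,\mxi)\,d\msnop\,d\mq\ge 0
$$
for $\rho\in\Ldc{\R^m}$ and $0\le\phi\in\Cnl{\R^d\times\Pd}$ is then immediate from the representation \eqref{repr_1} and the quadratic-form identity established in the first step: choosing $\varphi_1=\varphi_2=\varphi$ with $\varphi\equiv 1$ on a neighbourhood of $\mathrm{supp}\,\phi$ in $\mx$ (allowed by Remark \ref{rem1P}), $\phi=\psi\circ\pi_\Pd$-type symbols being dense enough, the left-hand side equals
$$
\lim_{n'}\int_{\R^d}\Big({\cal A}_{\phi_\Pd^{1/2}}\Big(\textstyle\int_{\R^m}\rho(\msnop)(\varphi u_{n'})(\cdot,\msnop)\,d\msnop\Big)\Big)(\mx)\,\overline{\Big({\cal A}_{\phi_\Pd^{1/2}}\Big(\int_{\R^m}\rho(\mq)(\varphi u_{n'})(\cdot,\mq)\,d\mq\Big)\Big)(\mx)}\,d\mx\ge 0,
$$
which is a non-negative quantity. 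I expect the main obstacle to be the joint measurability of $f$ and the careful handling of the non-continuity of $\langle\mu(\msnop,\mq,\cdot),\phi\rangle$ in $(\msnop,\mq)$: one cannot invoke Mercer's theorem directly (as the authors remark), so the absolute continuity with respect to $\nu$ and the Radon--Nikod\'ym construction must be done measurably in the parameters, which is exactly where the Lipschitz-in-$(\msnop,\mq)$ estimates on the mollified sequence $u_n^k$ from Lemma 8 and the density arguments of Theorem \ref{lfeb518} have to be reused.
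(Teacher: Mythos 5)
Your overall strategy (extract a non-negative scalar control measure from the ``diagonal'' of $\mu$, then apply Radon--Nikod\'ym) is the right one and is the same as the paper's, but your construction of $\nu$ contains a genuine gap. You build $\nu$ by testing $\mu$ against a single tensor weight $\lambda(\msnop)\overline{\lambda(\mq)}$, so that $\nu$ is essentially the ${\rm H}_\Pd$-measure of the single averaged sequence $\bigl(\int_{\R^m}\lambda(\msnop)u_{n'}(\cdot,\msnop)\,d\msnop\bigr)$. Such a $\nu$ does not dominate $\mu$: take $u_n(\mx,\msnop)=v_n(\mx)e(\msnop)$ with $v_n\rightharpoonup 0$ in $\Ld\Rd$ but not strongly, and $e\in\Ldc{\R^m}$ with $\int_{\R^m}\lambda\, e\,d\msnop=0$ (possible even for a strictly positive $\lambda$, since $e$ may change sign). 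Then your $\nu$ vanishes identically, while $\mu(\msnop,\mq,\cdot)$ is $\overline{e(\msnop)}e(\mq)\sigma$ with $\sigma\neq 0$ the H-measure of $(v_n)$, so the claimed absolute continuity $\mu(\msnop,\mq,\cdot)\ll\nu$ fails. The repair is exactly what the paper does: take a complete orthonormal basis $\{e_i\}$ of $\Ld{\R^m}$, let $\mu_{ij}$ be the ${\rm H}_\Pd$-measure generated by the averaged sequences $\int e_i(\msnop)u_n(\cdot,\msnop)d\msnop$ and $\int e_j(\mq)u_n(\cdot,\mq)d\mq$, prove the expansion $\mu=\sum_{i,j}\mu_{ij}\,\overline{e_i}\otimes e_j$, and set $\nu=\sum_i 2^{-i}\mu_{ii}$, a \emph{weighted trace} over the whole basis rather than a single diagonal entry.

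A second, related defect is your absolute-continuity step: you invoke Cauchy--Schwarz against ``the diagonal quantities at $\msnop$ and at $\mq$'', but the limit object $\mu$ of Theorem \ref{lfeb518} is only defined for a.e.\ $(\msnop,\mq)\in\R^{2m}$ (it arises as a weak-$\ast$ limit in $\Ldws{\R^{2m};{\cal M}_b(\Rd\times\Pd)}$), so its restriction to the Lebesgue-null diagonal $\{\msnop=\mq\}$ is not available, and the inequality you need cannot be formulated for the limit object directly. The basis expansion circumvents this by reducing everything to the countable family $\mu_{ij}$ of genuine H-measures, for which the hermitian bound $|\mu_{ij}(E)|\le\mu_{ii}(E)^{1/2}\mu_{jj}(E)^{1/2}$ holds and immediately gives $\mu_{ij}\ll\nu$ for all $i,j$, hence $\mu(\msnop,\mq,\cdot)\ll\nu$ for a.e.\ $(\msnop,\mq)$ via the expansion. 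Your final positivity argument (factoring $\psi=\psi^{1/2}\cdot\psi^{1/2}$ and using Plancherel) is correct, and your remark that Mercer's theorem is unavailable is on point; but without the trace-type control measure the core of the proposition is not established.
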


\begin{proof}
The proof is based on rewriting the measure $\mu(\msnop,
\mq,\mx,\mxi)$ via the basis in the (Hilbert) space $\Ld{\R^{2m}}$.

Accordingly, let $\{e_i\}_{i\in \N}$ be an orthonormal basis in
$\Ld{\R^m}$. Denote by $\mu_{ij}\in {\cal M}_b(\R^d\times \Pd)$ an
${\rm H}_\Pd$-measure generated by the sequences $\int_{\R^m}e_i(\msnop)u_n(\mx,\msnop)d\msnop$
and $\int_{\R^m}e_j(\mq)u_n(\mx,\mq)d\mq$. We claim:
\begin{equation}
\label{hs-repr} \mu(\msnop, \mq,\mx,\mxi)=\sum\limits_{i,j=1}^\infty
\mu_{ij}(\mx,\mxi)\bar e_i(\msnop) e_j(\mq).
\end{equation}
Indeed, take arbitrary $\rho\in \Ld{\R^{2m}}$, $\varphi_{1,2}\in \Cnl{\R^d}$, $\psi\in \Cp\Pd$, and notice that
\begin{equation*}
\rho(\msnop, \mq)=\sum\limits_{i,j=1}^\infty c_{ij}
e_i(\msnop)\bar e_j(\mq),
\end{equation*} where $(c_{ij})_{i,j\in \N}$ is a square sumable sequence.

According to the definition of the functional $\mu$, we have
\begin{align*}
&\int_{\R^{2m}}\rho(\msnop, \mq)\langle \mu(\msnop, \mq,\cdot),\varphi_1\bar\varphi_2\psi \rangle d\msnop d\mq\\
&=\lim\limits_{n\to \infty}\int_{\R^{2m}}\int_{\R^d} \rho(\msnop, \mq)
\Bigl({\cal A}_{\psi}\,\varphi_1 u_n(\cdot,\msnop)\Bigr)(\mx)
\,\bar\varphi_2 \bar u_n(\mx,\mq) d\mx d\msnop d\mq\\
&=\sum\limits_{i,j=1}^\infty\lim\limits_{n\to \infty}\int_{\R^d} \!c_{ij}
\Biggl({\cal A}_{\psi}\,\varphi_1 \int_{\R^m} u_n(\cdot,\msnop)e_i(\msnop)d\msnop  \Biggr)(\mx)\,
\bar \varphi_2 (\mx)\int_{\R^m} \!\bar u_n(\mx,\mq)\bar e_j(\mq)d\mq d\mx\\
&=\sum\limits_{i,j=1}^\infty c_{ij} \langle
\mu_{ij}(\mx,\mxi),\varphi_1\bar\varphi_2\psi\rangle
=\sum\limits_{i,j=1}^\infty  \langle
\mu_{ij}(\mx,\mxi),\varphi_1\bar\varphi_2\psi\rangle
\int_{\R^{2m}}\rho(\msnop, \mq)\bar e_i(\msnop) e_j(\mq)d\msnop d\mq,
\end{align*}
which completes the proof of \eqref{hs-repr}. Remark that in the
last derivation we have used the square integrability of the
sequence $(c_{ij})$ and the Lebesgue dominated convergence theorem.

We introduce a positive bounded measure
$$
\nu(\mx,\mxi)=\sum\limits_{i=1}^\infty \frac{1}{2^i} \mu_{ii}(\mx,\mxi),
$$
as the weighted trace of the measure matrix $(\mu_{ij})_{i,j=1,\infty}$ and claim that
$$
\mu(\msnop, \mq,\cdot) << \nu,
$$for almost every $\msnop, \mq\in \R^m$. Indeed, if $\nu(E)=0$ for some Borel set $E\subset \R^d\times\Pd$, then
$\mu_{ii}(E)=0$ for every $i\in \N$. On the other hand, due to the hermitian character of matrix ${\rm H}_\Pd$ measures, we have
$$
|\mu_{ij}(E)|\leq \mu_{ii}(E)^{1/2}\mu_{jj}(E)^{1/2}.
$$
From here, it follows that $\mu_{ij}(E)=0$ for every $i,j\in \N$, and thus, according to \eqref{hs-repr}, $\mu(\msnop, \mq,\mx,\mxi)(E)=0$ for almost every $\msnop, \mq\in \R^m$.

Now, the conclusion follows from the Radon-Nikodym theorem.

\end{proof}


Next, we shall make an extension of Theorem \ref{lfeb518}.


Notice that if in Theorem \ref {H-meas} we assume $u_n \in \pL
s{\Rd}$ for some $s>2$, then the $\R^d$--projection of a corresponding
H-measure   is absolutely continuous with
respect to the Lebesgue measure (see \cite[Corollary 1.5]{Tar} and
\cite[Remark 2, a)]{pan_arma}). Furthermore, in that case we can
assume that the test function $\varphi_{1}$ is merely in $\pL
{r}{\R^d}$.

The result generalises to sequences of functions taking values in a function space. More precisely, the following theorem holds.

\begin{theorem}
\label{lfeb518-r}
Assume that the sequence $(u_n)=(u_n(\mx,\msnop))$,
 converges weakly to zero in $\Ld{\R^{m+d}}\cap \Ld{\Rm; \pL s{\Rd}}$, $s>2$. Then the $\R^d$ projection $\int_{\Pd}d\nu(\mx,\mxi)$ of the measure $\nu$ from the last proposition can be extended to a bounded functional on  $\pL{r}\Rd$, where $r$ is the dual index of $s/2$.
Furthermore, for all $\varphi_1\in \Ld{\Rm; \pL {r}{\Rd}}
$,
$\varphi_2\in \Ldc{\R^{m};\Cnl\Rd}$,  and $\psi\in \pC d{\Pd}$, it holds:
\begin{equation}
\label{jan0218-r}
\begin{split}
\lim\limits_{n'}\int\limits_{\R^{2m}}\int\limits_{\Rd}
&(\ph_1 u_{n'})(\mx,
\msnop)
\,\Bigl(\overline{{\cal A}_{\psi_\Pd} \,\ph_2 u_{n'}(\cdot,
\mq)}\Bigr)(\mx) d\mx d\msnop d\mq\\
&=\int\limits_{\R^{2m}} \langle
\mu(\msnop,\mq,\cdot,\cdot),{\varphi}_1(\cdot,\msnop)\bar{\varphi}_2
(\cdot,\mq)\otimes\bar\psi\rangle d\msnop d\mq.
\end{split}
\end{equation}
\end{theorem}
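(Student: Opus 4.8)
The plan is to obtain the $\R^d$-projection statement from the classical improvement of H-measures for $\pL s$-sequences with $s>2$, and then to deduce \eqref{jan0218-r} by extending the identity already available for smooth test functions, the extension being powered by an a priori bilinear bound that relies on Lemma~\ref{marz}. First I would treat the $\R^d$-projection of $\nu$. Recall from the proof of Proposition~\ref{prop_repr} that $\nu=\sum_{i\ge1}2^{-i}\mu_{ii}$, where $\mu_{ii}\in{\cal M}_b(\R^d\times\Pd)$ is the ${\rm H}_\Pd$-measure of the scalar sequence $v^i_n(\mx):=\int_{\R^m}e_i(\msnop)u_n(\mx,\msnop)\,d\msnop$, $(e_i)$ an orthonormal basis of $\Ld{\R^m}$. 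By Minkowski's integral inequality $\nor{v^i_n}{\pL s\Rd}\le\nor{e_i}{\Ld{\R^m}}\nor{u_n}{\Ld{\Rm;\pL s\Rd}}$, the sequence $(v^i_n)$ is bounded in $\pL s\Rd$ (and, using $u_n\dscon0$ in $\Ld{\R^{m+d}}$, in $\Ld\Rd$ as well, so $\mu_{ii}$ exists), while testing $u_n\dscon0$ against $e_i\otimes g$, $g\in\pL{s'}\Rd$, shows $v^i_n\dscon0$ in $\pL s\Rd$. Choosing $\psi\equiv1$ in \eqref{basic1_new} identifies the $\Pd$-marginal of $\mu_{ii}$ with a weak-$\ast$ limit of $|v^i_n|^2\,d\mx$, which, being bounded in $\pL{s/2}\Rd$, is absolutely continuous with density $w_i\in\pL{s/2}\Rd$ and $\nor{w_i}{\pL{s/2}\Rd}\le\liminf_n\nor{v^i_n}{\pL s\Rd}^2\le\limsup_n\nor{u_n}{\Ld{\Rm;\pL s\Rd}}^2$ --- this is precisely the improvement of \cite[Corollary~1.5]{Tar} and \cite[Remark~2, a)]{pan_arma}, with a bound uniform in $i$. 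Hence $\sum_i2^{-i}w_i$ converges in $\pL{s/2}\Rd$, so $\int_\Pd d\nu(\cdot,\mxi)$ has an $\pL{s/2}\Rd$-density and extends to a bounded functional on its dual $\pL r\Rd$, $2/s+1/r=1$.

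Next I would establish the key estimate: for $\varphi_1\in\Ld{\Rm;\pL r\Rd}$, $\varphi_2\in\Ldc{\R^m;\Cnl\Rd}$ and $\psi\in\pC d\Pd$,
\begin{align*}
&\left|\int_{\R^{2m}}\!\int_{\R^d}(\varphi_1u_n)(\mx,\msnop)\,\overline{{\cal A}_{\psi_\Pd}(\varphi_2u_n(\cdot,\mq))(\mx)}\,d\mx\,d\msnop\,d\mq\right|\\
&\qquad\le C\,\nor{\psi}{\pC d\Pd}\,\nor{\varphi_1}{\Ld{\Rm;\pL r\Rd}}\,\nor{\varphi_2}{\Ld{\R^m;\Cnl\Rd}}\,\sup_n\nor{u_n}{\Ld{\Rm;\pL s\Rd}}^2 .
\end{align*}
Since $2/s+1/r=1$, H\"older's inequality in $\mx$ bounds the innermost integral by $\nor{\varphi_1(\cdot,\msnop)}{\pL r\Rd}\nor{u_n(\cdot,\msnop)}{\pL s\Rd}\nor{{\cal A}_{\psi_\Pd}(\varphi_2u_n(\cdot,\mq))}{\pL s\Rd}$; by Lemma~\ref{marz} the multiplier ${\cal A}_{\psi_\Pd}$, $\psi_\Pd=\psi\circ\pi_\Pd$, is bounded on $\pL s\Rd$ with norm controlled by $\nor{\psi}{\pC d\Pd}$, so the last factor is at most $C\nor{\psi}{\pC d\Pd}\nor{\varphi_2(\cdot,\mq)}{\Lb\Rd}\nor{u_n(\cdot,\mq)}{\pL s\Rd}$; two applications of the Cauchy--Schwarz inequality, in $\msnop$ and in $\mq$, then finish the bound. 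This is precisely where $s>2$ matters and where $\psi\in\pC d\Pd$, rather than merely $\psi\in\Cp\Pd$, is indispensable.

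Finally I would run the density argument. For $\varphi_{1,2}\in\Ldc{\R^m;\Cnl\Rd}$ and $\psi\in\Cp\Pd$ the identity \eqref{jan0218-r} is already contained in Theorem~\ref{lfeb518} and Remark~\ref{remark_3}: moving the multiplier onto the first factor via the adjoint relation $\langle a,{\cal A}_{\psi_\Pd}b\rangle_{\Ld\Rd}=\langle{\cal A}_{\bar\psi\circ\pi_\Pd}a,b\rangle_{\Ld\Rd}$ (Plancherel) turns the left side of \eqref{jan0218-r} into the left side of \eqref{jan0218} written with $\bar\psi$, which accounts for the conjugate on the right of \eqref{jan0218-r}. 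Now, fixing $\varphi_2\in\Ldc{\R^m;\Cnl\Rd}$ and $\psi\in\pC d\Pd$, and using that $r<\infty$ (because $s>2$), so that $\Ldc{\R^m;\Cnl\Rd}$ is dense in $\Ld{\Rm;\pL r\Rd}$, I would take $\varphi_1^\delta\to\varphi_1$ in the latter space. Applying the previous estimate to $\varphi_1-\varphi_1^\delta$ shows that the $\mx,\msnop,\mq$-integral in \eqref{jan0218-r} for $\varphi_1^\delta$ converges, as $\delta\to0$, to the one for $\varphi_1$ \emph{uniformly in $n$}; since the limit in $n$ exists for each $\delta$ by the case already settled, it exists for $\varphi_1$ and the two limits commute. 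Consequently the right-hand sides converge too, the limiting functional is bounded on $\Ld{\Rm;\pL r\Rd}$ by the same estimate, and --- in view of the representation $\mu=f\nu$ of Proposition~\ref{prop_repr} together with the $\pL{s/2}$-regularity of $\int_\Pd d\nu$ from the first step --- it equals $\int_{\R^{2m}}\langle\mu(\msnop,\mq,\cdot,\cdot),\varphi_1(\cdot,\msnop)\bar\varphi_2(\cdot,\mq)\otimes\bar\psi\rangle\,d\msnop\,d\mq$, which gives \eqref{jan0218-r}.

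I expect the main obstacle to be the bilinear estimate --- making the H\"older exponents close exactly at $2/s+1/r=1$ and controlling ${\cal A}_{\psi_\Pd}$ on $\pL s\Rd$ through Lemma~\ref{marz} --- together with checking in the first step that the $\pL{s/2}$ bounds on the diagonal measures $\mu_{ii}$ are uniform in $i$, so that the series defining $\nu$ converges in $\pL{s/2}\Rd$; the interchange of limits and the identification of the extended functional with the pairing against $\mu$ should then be routine.
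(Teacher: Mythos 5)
Your proposal is correct and follows essentially the same route as the paper: the same H\"older estimate with exponents closing at $2/s+1/r=1$ combined with the $\pL s$-boundedness of ${\cal A}_{\psi_\Pd}$ from Lemma~\ref{marz}, a density argument in $\varphi_1$ with uniform-in-$n$ convergence to justify exchanging limits, and the bound $\sum_i 2^{-i}\langle\mu_{ii},\varphi\otimes 1\rangle\leq C\nor{\varphi}{\pL r\Rd}$ for the projection of $\nu$. The only cosmetic difference is that you obtain the $\pL{s/2}$-density of $\int_\Pd d\nu$ componentwise via the classical Tartar--Panov improvement for $\pL s$-bounded scalar sequences and then sum, whereas the paper estimates the series directly by the same H\"older inequality; the content is identical.
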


\begin{proof}


Let $\varphi_1^\eps\in \Cc{\R^{m+d}}$, $\eps>0$, be a family of
continuous functions such that $\|\varphi_1 -\varphi_1^\eps
\|_{\Ld{\Rm; \pL {r}{\Rd}}}\to 0$ as $\eps\to 0$. By means of Remarks \ref{rem1P} and \ref{remark_3} we define

\begin{align}
\label{cshy}
\begin{split}
&\int\limits_{\R^{2m}} \langle
\mu(\msnop,\mq,\cdot,\cdot),\varphi_1\bar{\varphi}_2
\otimes\bar\psi\rangle d\msnop d\mq: =\lim\limits_{\eps\to
0}\int\limits_{\R^{2m}} \langle
\mu(\msnop,\mq,\cdot,\cdot),\varphi^\eps_1\bar{\varphi}_2
\otimes\bar\psi\rangle d\msnop d\mq\\
&=\lim\limits_{\eps\to 0}\lim\limits_{n'\to \infty}\int\limits_{\R^{2m}}\int\limits_{\Rd}
(\ph_1^\eps u_{n'})(\mx,
\msnop)
\,\Bigl(\overline{{\cal A}_{\psi_\Pd} \,\ph_2 u_{n'}(\cdot,
\mq)}\Bigr)(\mx) d\mx d\msnop d\mq.
\end{split}
\end{align}
The latter limit exists since for $\eps_1,\eps_2>0$ it
holds

\begin{align}
\nonumber
&\Bigg|\,\int\limits_{\R^{2m}} \langle
\mu(\msnop,\mq,\cdot,\cdot),(\varphi_1^{\eps_1}-\varphi_1^{\eps_2})(\cdot,\msnop)\bar{\varphi}_2(\cdot,\mq)
\otimes\bar\psi\rangle d\msnop d\mq\Bigg|\\
&\leq\limsup\limits_{n'} \int\limits_{\R^{2m}}\int\limits_{\Rd}
\Big|  (\varphi_1^{\eps_1}-\varphi_1^{\eps_2} ) u_{n'}(\mx,\msnop)
\,\Bigl(\overline{{\cal A}_{\psi_\Pd} \,\ph_2 u_{n'}(\cdot,
\mq)}\Bigr)(\mx) \Big| d\mx d\msnop d\mq \nonumber\\& \leq
\limsup\limits_{n'} C \int_{\R^{2m}}  \|
(\varphi_1^{\eps_1}-\varphi_1^{\eps_2} ) u_{n'}(\cdot,\msnop)
\|_{\pL {s'}{\Rd}}   \|(\varphi_2 u_{n'})(\cdot,\mq)\|_{\pL s{\Rd}}
d\msnop d\mq
\nonumber\\
& \leq \limsup\limits_{n'} C \int_{\R^{m}}
\|(\varphi_1^{\eps_1}-\varphi_1^{\eps_2})(\cdot,\msnop)\|_{\pL
r{\Rd}} \|( u_{n'}(\cdot,\msnop)\|_{\pL s{\Rd}} d\msnop
\nonumber\\&\hskip26mm \cdot
 \int_{\R^{m}}\|\varphi_2 (\cdot,\mq)\|_{\Lb{\Rd}}\| u_{n'}(\cdot,\mq)\|_{\pL s{\Rd}}  d\mq
\nonumber\\
& \leq \limsup\limits_{n'} C \,\,
\|(\varphi_1^{\eps_1}-\varphi_1^{\eps_2})\|_{\Ld{\Rm; \pL r{\Rd}}}
\|\varphi_2 \|_{\Ld{\Rm; \Lb{\Rd}}} \,\| u_{n'}\|_{\Ld{\Rm; \pL
s{\Rd}}}^2 \,, \nonumber
\end{align}
where $C$ depends on $s$, $d$, and $\|\psi\|_{\pC d{\Pd}}$. Since
$\|\varphi_1-\varphi_1^\eps\|_{\Ld{\Rm; \pL {r}{\Rd}}}\to 0$, the
limit in \eqref{cshy} exists.

The same analysis from the above implies
\begin{equation*}
\lim\limits_{\eps\to 0 }\int\limits_{\R^{2m}}\int\limits_{\Rd}
\Big|  (\varphi_1^{\eps}-\varphi_1 ) u_{n'}(\mx,\msnop)
\,\Bigl(\overline{{\cal A}_{\psi_\Pd} \,\ph_2 u_{n'}(\cdot,
\mq)}\Bigr)(\mx) \Big|
d\mx d\msnop d\mq = 0
\end{equation*}
and the convergence is uniform with respect to $n'$. Thus we can exchange limits in the second line of \eqref{cshy}, which proves \eqref{jan0218-r}.

In order to prove that  the $\R^d$ projection $\int_{\Pd}d\nu(\mx,\mxi)$ of
the measure $\nu$ belongs to $\pL{r'}\Rd$ take an
arbitrary $\varphi\in  \Cc\Rd$ and consider
\begin{align*}
&\Big|\int_{\R^d} \varphi(\mx)\int_{\Pd}d\nu(\mx,\mxi) \bigg|
=\sum\limits_{i=1}^\infty \Big| \langle \frac{1}{2^i}\mu^{ii},\varphi \otimes 1\rangle \Big|\\
&\leq\sum\limits_{i=1}^\infty\frac{1}{2^i}\lim\limits_{n'\to
\infty}\int_{\R^{2m}}\int_{\Rd}\Big|\varphi(\mx)u_{n'}(\mx,\msnop)
e_i(\msnop)\bar u_{n'}(\mx,\mq)\bar e_i(\mq) \Big|d\mx d\msnop d\mq\\
&\leq
\sum\limits_{i=1}^\infty\frac{1}{2^i}\|\varphi\|_{\pL{r}\Rd}
\limsup\limits_{n'\to \infty}\|u_{n'}\|^2_{\Ld{\R^{2m}; \pL s{\R^d}}}\leq C\|\varphi\|_{\pL{r}\Rd}.
\end{align*}
Thus $\int_{\Pd}d\nu(\mx,\mxi)$ can be extended to a bounded functional on  $\pL{r}\Rd$, i.e. there exists an $h\in \pL{r'}\Rd$ such that $\int_{\Pd}d\nu(\mx,\mxi) =h(\mx) d\mx$.

\end{proof}

The following statement on the measure $\nu$ now follows from   results on slicing measures \cite[Theorem 1.5.1]{Evans}.

\begin{lemma}
\label{lemma-slicing}
Under assumptions of the last theorem, for $\hbox{\rm a.e. } \mx\in \Rd$ there exists a Radon probability measure $\nu_\mx$ such that $d\nu(\mx, \mxi)= d \nu_\mx  (\mxi) h(\mx) d\mx$, where $h$ is a $\LLp {r'}$ function introduced above. More precisely, for each $\phi\in\Cnl{\Rd\times \Pd}$
$$
\int_{\Rd\times \Pd} \phi(\mx, \mxi)d\nu(\mx, \mxi) = \int_\Rd \left( \int_\Pd \phi(\mx, \mxi) d \nu_\mx  (\mxi) \right)h(\mx) d\mx\,.
$$
The above result is also valid if we take a test function $\phi \in \pL{r}{\Rd; {\Cp \Pd}}$.
\end{lemma}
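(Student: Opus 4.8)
The plan is to read off the slices $\nu_\mx$ directly from the disintegration theorem for Radon measures in the form of \cite[Theorem 1.5.1]{Evans}. By Proposition \ref{prop_repr} and Theorem \ref{lfeb518-r} the measure $\nu$ is a finite, non-negative Radon measure supported on $\Rd\times\Pd\subset\R^{2d}$, and, since $\Pd$ is compact, the push-forward of $\nu$ under the projection $\pi\colon\R^{2d}\to\Rd$, $\pi(\mx,\mxi)=\mx$, is precisely the measure $h(\mx)\,d\mx$ established in Theorem \ref{lfeb518-r}. Applying the disintegration theorem to $\nu$ and $\pi$ yields a weakly measurable family $\{\nu_\mx\}$ of Radon probability measures, each concentrated on the fibre $\{\mx\}\times\Pd$ (which we identify with a probability measure on $\Pd$), such that
\[
\int_{\Rd\times\Pd}\phi(\mx,\mxi)\,d\nu(\mx,\mxi)=\int_\Rd\Bigl(\int_\Pd\phi(\mx,\mxi)\,d\nu_\mx(\mxi)\Bigr)h(\mx)\,d\mx
\]
for all $\phi\in\Cnl{\Rd\times\Pd}$. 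The theorem produces this for $(\pi_\#\nu)$-a.e. $\mx$, i.e. for a.e. $\mx\in\{h>0\}$; on $\{h=0\}$ one fixes $\nu_\mx$ to be an arbitrary probability measure on $\Pd$, which changes nothing since both sides carry the weight $h$. As a check, testing the identity with $\phi=\varphi\otimes\mathbf 1_\Pd$, $\varphi\in\Cc\Rd$ — admissible because $\Pd$ is compact, so $\mathbf 1_\Pd\in\Cp\Pd$ — gives $\int_\Rd\varphi\,h\,d\mx=\int_\Rd\varphi\,\nu_\mx(\Pd)\,h\,d\mx$ for every such $\varphi$, confirming $\nu_\mx(\Pd)=1$ for $(h\,d\mx)$-a.e.\ $\mx$.

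For the last assertion, I would extend the identity from $\phi\in\Cnl{\Rd\times\Pd}$ to $\phi\in\pL r{\Rd;\Cp\Pd}$ by density, using that $\Cnl{\Rd\times\Pd}$ is dense in $\pL r{\Rd;\Cp\Pd}$ (note $1<r<\infty$ since $s>2$). On this dense subspace the two sides agree and, by Hölder's inequality with exponents $r,r'$ together with $h\in\pL{r'}\Rd$ and $\nu_\mx(\Pd)=1$,
\[
\Bigl|\int_{\Rd\times\Pd}\phi\,d\nu\Bigr|=\Bigl|\int_\Rd\int_\Pd\phi(\mx,\mxi)\,d\nu_\mx(\mxi)\,h(\mx)\,d\mx\Bigr|\le\int_\Rd\nor{\phi(\mx,\cdot)}{\Cp\Pd}h(\mx)\,d\mx\le\nor{\phi}{\pL r{\Rd;\Cp\Pd}}\nor{h}{\pL{r'}\Rd}.
\]
Hence $\phi\mapsto\int\phi\,d\nu$ extends to a bounded linear functional on $\pL r{\Rd;\Cp\Pd}$ which coincides with $\phi\mapsto\int_\Rd\int_\Pd\phi\,d\nu_\mx\,h\,d\mx$ on the dense subspace $\Cnl{\Rd\times\Pd}$, hence on all of $\pL r{\Rd;\Cp\Pd}$; this is the stated extension.

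I do not expect a genuine obstacle here: the structural content is entirely carried by the cited disintegration theorem, whose hypotheses hold because $\Pd$ is compact and, by Theorem \ref{lfeb518-r}, the $\Rd$-marginal of $\nu$ is absolutely continuous with $\LLp{r'}$ density $h$. The only points needing care are the harmless choice of $\nu_\mx$ on the set $\{h=0\}$ (which need not be Lebesgue-negligible) and the routine density plus Hölder estimate used to admit $\pL r{\Rd;\Cp\Pd}$ test functions.
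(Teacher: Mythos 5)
Your argument is exactly the paper's: the lemma is stated there with no written proof beyond the citation of the slicing/disintegration theorem \cite[Theorem 1.5.1]{Evans}, applied to $\nu$ with marginal $h(\mx)\,d\mx$ from Theorem \ref{lfeb518-r}. Your write-up correctly fills in the details the paper leaves implicit (the choice of $\nu_\mx$ on $\{h=0\}$ and the density--H\"older extension to $\pL r{\Rd;\Cp\Pd}$), so it is the same approach, just more explicit.
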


\section{Proof of the main theorem}

In this section, we shall prove Theorem \ref{main-result}. The proof
is based on the special choice of the test function to be applied in
\eqref{defws}, and the ${\rm H}_\Pd$-measures techniques developed in the
previous section.

We introduce the multiplier operator ${\cal I}$ with the symbol
$\frac{1-\theta(\mxi)}{\left(|\xi_1|^{l\alpha_1}+\dots+|\xi_d|^{l\alpha_d}\right)^{1/l}}$, where
$\theta\in\Cbc\Rd$ is a cut-off function, such that $\theta\equiv1$ on a neighbourhood of the origin.

According to Lemma \ref{marz}, for any $\psi\in \pC d\Pd$, the
multiplier operator ${\cal I} \circ {\cal A}_{\psi\circ\pi_{\Pd}}:
\Ld\Rd\cap \pL{s}\Rd \to \W\malpha{s}{\R^d}$
 is bounded (with $\LLp {s}$ norm considered on the domain). Indeed, it is enough to notice
that the symbol of
$\pa^{\alpha}_{x_k}\left({\cal I}\circ {\cal A}_{\psi\circ\pi_{\Pd}}\right)$:
$$
(\psi\circ\pi)(\mxi)\frac{(1-\theta(\mxi))(2\pi i\xi_k)^{\alpha_k}}{\left(|\xi_1|^{l\alpha_1}+\dots+|\xi_d|^{l\alpha_d}\right)^{1/l}}
$$
is a smooth, bounded function that satisfies conditions of Lemma \ref{m1}.

Insert in \eqref{defws} (with reintroduced sub-index $n$) the test function $g_n$ given by (a
similar procedure was firstly applied in \cite{sazh}):
\begin{equation}
\label{jul0218} g_n(\mx,\msnop)=\rho_1(\msnop)\int_{\Rm}
({\cal I}\circ {\cal A}_{\psi\circ\pi_{\Pd}})\bigl(\ph u_n(\cdot, \mq)\bigr)(\mx)\rho_2(\mq)d\mq,
\end{equation}
where  $\psi\in \pC d\Pd$, $\ph\in \Cbc\Rd$, $\rho_1, \rho_2 \in {\rm
C}_c^{|\mkappa|}(\R^m)$, and $\mkappa$ is the multi-index appearing
in \eqref{main-sys}. Due to the boundedness properties of the
operator ${\cal I}\circ {\cal A}_{\psi\circ\pi_{\Pd}}$ discussed
above, the sequence $(g_n)$ is bounded in ${\rm
C}_c^{|\mkappa|}(\R^m) \times \W\malpha s\Rd$.

Letting $n\to \infty$ in \eqref{defws}, we
get after taking into account Theorem \ref{lfeb518-r} and the
strong convergence of $(G_n)$
\begin{equation*}
\begin{split}
\int_{\R^{2m}}\int_{\R^d\times \Pd}
A(\mx,\mxi,\msnop)\overline{\rho_1(\msnop)\rho_2(\mq){\ph}(x)\psi(\mxi)}d\mu(\msnop,\mq,\mx,\mxi)d\msnop d\mq=0,
\end{split}
\end{equation*}
where, let it be repeated,  $A(\mx,\mxi,\msnop)=\sum_{k=1}^d (2\pi i\xi_k)^{\alpha_k}
a_k$. As  the test functions $\rho_i$, $\ph$, and $\psi$ are taken from dense subsets in appropriate spaces,  we conclude
\begin{equation}
\label{jul0328} A(\mx,\mxi,\msnop)
d\mu(\msnop,\mq,\mx,\mxi)=0, \quad \ae{\msnop, \mq \in\R^{2m}}\,.
\end{equation}

For $s=2$ the non-degeneracy condition \eqref{kingnl} directly implies that $\mu=0$.
In order to show the same result for $s>2$ fix an arbitrary $\delta>0$, and for a  $\rho\in
\Ldc{\R^m}$ and $\phi \in \Cnl{\R^d\times \Pd}$ consider the test
function
\begin{equation*}
\frac{\rho(\msnop)\bar\rho(\mq)\phi(\mx,\mxi)\overline{A(\mx,\mxi,\msnop)}}{|A(\mx,\mxi,\msnop)|^2
+\delta}.
\end{equation*}
From \eqref{jul0328}, we obtain

\begin{equation*} \int_{\R^{2m}}\int_{\R^d\times
\Pd}\frac{\rho(\msnop)\bar\rho(\mq)\phi(\mx,\mxi)|A(\mx,\mxi,\msnop)|^2}{
|A(\mx,\mxi,\msnop)|^2+\delta} d\mu(\msnop,\mq,\mx,\mxi) d\msnop d\mq=0,
\end{equation*}
which by means of representation \eqref{repr_1} and Fubini's theorem takes the form
\begin{equation}
\label{fin_4} \int_{\R^d\times \Pd}\int_{\R^{2m}}
\frac{\rho(\msnop)\bar\rho(\mq)\phi(\mx,\mxi)|A(\mx,\mxi,\msnop)|^2}{ |A(\mx,\mxi,\msnop)|^2
+\delta}f(\msnop,\mq,\mx,\mxi) d\msnop d\mq d\nu(\mx,\mxi)=0.
\end{equation}
Let us denote
$$
I_\delta(\mx, \mxi)=  \int_{\R^{2m}}
\rho(\msnop)\bar\rho(\mq)\frac{|A(\mx,\mxi,\msnop)|^2}{ |A(\mx,\mxi,\msnop)|^2
+\delta}f(\msnop,\mq,\mx,\mxi)d\msnop d\mq\,.
$$
According to the non-degeneracy condition \eqref{kingnl} and the representation
of the measure $\nu$ given in Lemma \ref{lemma-slicing}, for $s>2$  we have
$$
I_\delta(\mx, \mxi)\to   \int_{\R^{2m}}
\rho(\msnop)\bar\rho(\mq)f(\msnop,\mq,\mx,\mxi)d\msnop d\mq,
$$
as $\delta\to 0$ for $\nu- {\rm a.e. }\, (\mx, \mxi) \in \R^d\times \Pd$.
By using the Lebesgue dominated convergence theorem, it follows from
\eqref{fin_4} after letting $\delta\to 0$:
\begin{align*}
&\int_{\R^d\times
\Pd}\int_{\R^{2m}}\rho(\msnop)\bar\rho(\mq)\phi(\mx,\mxi)f(\msnop,\mq,\mx,\mxi) d\msnop d\mq d\nu(\mx,\mxi)
\\
&=\int_{\R^{2m}}\rho(\msnop)\bar\rho(\mq)\,\langle\mu(\msnop,\mq,\cdot,\cdot), \phi\rangle d\msnop d\mq=0. \nonumber
\end{align*}
Having in mind the definition of the measure $\mu$
from Theorem \ref{lfeb518}, by putting here
$\phi(\mx,\mxi)=|\varphi(\mx)|^2$ for $\varphi\in \Cnl{\R^d}$, we immediately obtain
$$
\lim\limits_{n'\to
\infty}\int_{\R^d}\left|\int_{\Rm}\rho(\msnop)u_{n'}(\mx,\msnop)d\msnop \right|^2
|\varphi(\mx)|^2d\mx =0.
$$
Due to arbitrariness of $\varphi$, this concludes the proof. $\Box$

\begin{remark}
We conclude the section by remarking that our results easily extend
to equations containing mixed derivatives with respect to the space variables  (see also \cite[Theorem
2.1]{Ger}):

\begin{equation}
\label{general}
\begin{split}
{\cal P}u_n(\mx,\msnop)&=\sum\limits_{s\in I}\partial^{\malpha_s}_\mx
\left(a_s(\mx,\msnop) u_n(\mx,\msnop)\right)=\pa^\mkappa_\msnop
G_n(\mx,\msnop),
\end{split}
\end{equation} where $I$ is a finite set of indices, and $\partial_\mx^{\malpha_s}=\partial_{x_1}^{\alpha_{1s}}\dots \partial_{x_d}^{\alpha_{ds}}$,
for a multi-index $\malpha_s=(\alpha_{s1},\dots,\alpha_{sd})\in
\R^d$.

Denote by $A$ the principal symbol of the
(pseudo-)differen\-tial operator ${\cal P}$, which is of the form
$$
A(\mxi,\mx,\msnop)=\sum\limits_{s\in I'} (2\pi i\mxi)^{\malpha_s}a_s(\mx,\msnop),
$$
where the upper sum goes above  all terms from \eqref{general} whose order of derivative $\malpha_s$ is not dominated by any other multiindex from $I$.

For $A$ we must additionally assume  that there exist $\alpha_1, \cdots, \alpha_d\in \Rpl$ such that for any positive
$\lambda\in \R$, it holds
$$
A(\lambda^{1/\alpha_1}\xi_1,\dots,\lambda^{1/\alpha_d}\xi_d,\mx,\msnop)=
\lambda A(\mxi,\mx,\msnop),
$$
and that it satisfies
genuine non-degeneracy condition: for almost every $x\in \R^d$, every
$\xi\in \Pd$, it holds
\begin{equation*}
A(\mx,\mxi,\msnop)\not= 0 \quad\ae{ \msnop \in \Rm}\,;
\end{equation*}

 The proof of Theorem \ref{main-result} for equation of form
\eqref{general}  goes along the same lines as for the equation
\eqref{main-sys}.

\end{remark}

\begin{remark}
Let us finally remark that in the case when derivative orders $\alpha_k$,
$k=1,\dots,d$, are non-negative integers, we can assume that the
sequence $u_n$ is only locally bounded in
$\Ldl{\R^m; \Ll s\Rd}$.

In that case we simply take

\begin{equation*}
g_n(\mx,\msnop)=\rho_1(\msnop)\ph(\mx) \int_{\Rm} ({\cal I}\circ
{\cal A}_{\psi\circ\pi_{\Pd}})\bigl(\ph u_n(\cdot,
\mq)\bigr)(\mx)\rho_2(\mq)d\mq,
\end{equation*} instead of $g_n$ from \eqref{jul0218}. By repeating the
rest of the procedure from this section, we conclude that the
measure $\mu$ from Theorem \ref{lfeb518} corresponding to $(\ph u_n)$ equals zero. Due to
arbitrariness of $\ph$, we conclude that for any $\rho\in \Ldc{\R^m}$, the sequence $(\int u_n(\mx,\msnop) \rho(\msnop)d\msnop)$ is strongly
precompact in $\Ldl\Rd$.

\end{remark}

\section{Ultra-parabolic equation with discontinuous coefficients}

In this section, we consider an ultra-parabolic equation with
discontinuous coefficients in a domain $\Omega$ (an open subset of
$\Rd$). Ultraparabolic equations (with regular coefficients) were
first considered by Graetz \cite{Gr} and Nusselt \cite{Nus} in their
investigations concerning the heat transfer. A specific situation modelled by such equations is the one when
diffusion can be neglected in the directions $x_{l+1}, \dots, x_d$,
$l\geq 0$. Recently, such
equations were investigated in \cite{pan_jms} and we aim to extend
results from there.

More precisely, the equation that we are going to
consider here has the form
\begin{equation}
\label{scldf} \dv \vf(\mx,u)- \dv \dv  \mB(\mx,u) +\psi(\mx,u)=0,
\end{equation}
where $\mB(x,u)=(b_{jk})_{j,k=1,\dots,d}$ is a symmetric matrix such that
for some $l<d$ it holds $(b_{jk})\equiv 0$ for $\min(j, k)\leq l$, while
$\tilde{\mB}=(b_{jk})_{j,k=l+1,\dots,d}$   satisfies an
ellipticity condition on $\R^{d-l}$ in the following sense: for
every $\tilde{\mxi}\in \R^{d-l}$, $\lambda_1,\lambda_2\in \R$
and $\mx\in \Omega$,
$$
 \left(\lambda_1>\lambda_2\right) \povlaci
(\tilde{\mB}(\mx,\lambda_1)-\tilde{\mB}(\mx,\lambda_2))\tilde{\mxi} \cdot  \tilde{\mxi}\geq c |\tilde{\mxi}|^2, \ \ c>0.
$$

Accordingly, we shall use anisotropic spaces like $\W{({\sf 1,
2})}q\Omega$, where $({\sf 1, 2})\in \Rd$ is a multiindex with first
$l$ components equal to 1.

Furthermore, we assume that $\psi\in \Lj{\Omega;\Lb\R}$, while $\vf=(f_1,\dots,f_d)$ and $\mB$ are such
that for every $j, k=1,\dots,d$
\begin{equation*}
 \partial_\lambda f_k, \partial_\lambda b_{jk} \in \Ldl{\R;
{\Ll r\Omega}}, \ \ r>1.
\end{equation*}

We also need to assume a kind of uniform continuity of $\vf$ and $\mB$ in the sense that there exists an increasing function $w$ on $\R^+$, vanishing and continuous at $0$ (i.e. $w$ is a modulus of continuity type function), and $\sigma\in \Ll {1+\eps}\Omega, \,\eps>0$ such that
\begin{equation}
\label{uc} |\vf(x,\lambda_1)-\vf(x,\lambda_2)|,
|\mB(x,\lambda_1)-\mB(x,\lambda_2)| \leq
w\left(\big|\lambda_1-\lambda_2)\big|\right)\big|\sigma(x)\big|.
\end{equation}

Concerning regularity with respect to $\mx\in \R^d$ of the functions $\vf$ and $\mB$, we assume that for every $\lambda\in \R$
\begin{equation*}
 \dv \vf(\mx,\lambda)-\dv \dv  \mB(\mx,\lambda)=\gamma(\mx,\lambda)\in
{\cal M}(\R^d).
\end{equation*}

To proceed, denote $\gamma(\mx,\lambda)=\omega(\mx,\lambda)d\mx+\gamma^s(\mx,\lambda)$
where $\omega(\mx,\lambda)d\mx$ denotes the regular, and $\gamma^s(\mx,\lambda)$
denotes the singular part of the measure $\gamma$ with respect to the Lebesgue measure. The following
definition is used in \cite{pan_jms}.

\begin{definition}
\label{eac}
We say that a function $u\in \Lb\Omega$ represents an entropy admissible weak solution to \eqref{scldf} if for every $\lambda \in \R$ it holds
\begin{align}
\label{p5}
&\dv \Bigl({\rm sgn}(u(\mx) - \lambda)\bigl(\vf(\mx, u(\mx)) - \vf(\mx, \lambda)\bigr)\Bigr) \\&-
\dv \dv  \Bigl({\rm sgn}(u(\mx) - \lambda)\bigl(\mB(\mx, u(\mx)) - \mB(\mx, \lambda)\bigr)\Bigr) \nonumber\\
&+
{\rm sgn}(u(\mx)- \lambda)\bigl(\omega(\mx,\lambda) + \psi(\mx, u(\mx))\bigr) - |\gamma(\mx,\lambda)| \leq 0
\nonumber
\end{align} in the sense of distributions on $\R^d$.
\end{definition}

We shall prove a result similar to those from \cite{pan_jms, pan_arma}, stating the assumptions under which a sequence of entropy solutions  is strongly
precompact in $\Ldl\Omega$. There it is assumed that $\max\limits_{\lambda\in
\oi {-M}M}|\vf (\cdot,\lambda)|, \max\limits_{\lambda\in
\oi {-M}M}|\mB (\cdot,\lambda)|\in \Ldl\Omega$, for $M=\limsup_n \nor{u_n}{\Lb\Omega}$,  while we demand $ \partial_\lambda \vf, \partial_\lambda \mB \in \Ldl{\R;
{\Ll r\Omega}}$ for an $r>1$. Remark that we have increased regularity with respect to $\lambda\in \R$ (there the continuity is merely assumed), but we have decreased it with respect to $\mx\in \Omega$. However, in the case $r\geq 2$ the statement of the next theorem also follows from the more general results of \cite{pan_jms}.

\begin{theorem}
Assume that the coefficients of equation \eqref{scldf} satisfy the genuine nonlinearity conditions analogical to \eqref{kingnl}:

\begin{itemize}

\item for every $\mxi=(\hat{\mxi},\tilde{\mxi})\in
\Pd=\{(\hat{\mxi},\tilde{\mxi})\in \R^{l}\times \R^{d-l}:\, |\hat{\mxi}|^2+|\tilde{\mxi}|^4=1 \}$ and almost every $x\in \R^d$
\begin{equation}
\label{pgnl}  2\pi i\sum\limits_{k=1}^l \xi_k \pa_\lambda
f_k(\mx,\lambda)+4 \pi^2 \langle \pa_\lambda\mB(\mx,\lambda)\mxi,\mxi
\rangle
\not=0 \quad\ae{\lambda\in \R}\,.
\end{equation}
\end{itemize}

Then, a sequence of entropy solutions $(u_n)$ to \eqref{scldf} such
that $\nor{u_n}{\Lb\Omega}<M$ for every $n\in \N$ is strongly
precompact in $\Ldl\Omega$.

\end{theorem}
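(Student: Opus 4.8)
The plan is to reduce the statement about entropy solutions of the ultra-parabolic equation \eqref{scldf} to an application of Theorem \ref{main-result} (or rather, its discontinuous-coefficient version with $s>2$). First I would fix $M=\limsup_n\nor{u_n}{\Lb\Omega}$ and introduce, following Panov, the kinetic-type functions $v_n(\mx,\lambda)={\rm sgn}(\lambda-u_n(\mx))$ (or equivalently the Heaviside-type functions), which are bounded in $\Lb{\R^d;\Lb{[-M,M]}}$ and hence in $\Ld{[-M,M];\pL s\Omega}$ for every $s$; in particular we may take $s>2$, so the discontinuous-coefficient hypotheses of Theorem \ref{main-result} are available. The entropy inequality \eqref{p5}, combined with the symmetric inequality obtained by testing with $-\lambda$ and changing sign (or by a Kruzhkov-type doubling), yields in the sense of distributions on $\R^d$ a kinetic equation of the form
\begin{equation*}
2\pi i\sum_{k=1}^l \partial_{x_k}\bigl(\partial_\lambda f_k(\mx,\lambda)\,v_n\bigr)-4\pi^2\sum_{j,k=l+1}^d\partial_{x_j}\partial_{x_k}\bigl(\partial_\lambda b_{jk}(\mx,\lambda)\,v_n\bigr)=\partial_\lambda\,(\text{stuff}),
\end{equation*}
where the right-hand side is a $\lambda$-derivative of a sum of terms each of which is either a bounded measure in $\mx$ (coming from $\gamma$, $\psi$, and the entropy defect) or a term converging strongly (after the standard reduction to a single limiting profile) in the appropriate negative-order space.

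The key technical point is matching the left-hand side with the operator ${\cal P}$ of \eqref{main-sys}: here $d$ plays the role of the space variable, $\msnop=\lambda\in\R^1$ so $m=1$, the first $l$ derivatives are of order $\alpha_k=1$ and the remaining $d-l$ are of order $\alpha_k=2$, which is exactly the anisotropic scaling encoded in the manifold $\Pd=\{|\hat\mxi|^2+|\tilde\mxi|^4=1\}$ and the principal symbol
\begin{equation*}
A(\mx,\mxi,\lambda)=2\pi i\sum_{k=1}^l\xi_k\,\partial_\lambda f_k(\mx,\lambda)+4\pi^2\langle\partial_\lambda\mB(\mx,\lambda)\mxi,\mxi\rangle,
\end{equation*}
appearing in the genuine non-degeneracy condition \eqref{pgnl}. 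The coefficients $a_k=\partial_\lambda f_k$ and $a_{jk}=\partial_\lambda b_{jk}$ lie in $\Ldl{\R;\Ll r\Omega}$ with $r>1$; after localising (multiplying by a cut-off supported in $\Omega$ and in $\lambda\in[-M,M]$) and choosing $s$ so that $2/s+1/r=1$, i.e. $s=2r/(r-1)\ge2$, these coefficients satisfy condition b) of the main result. The right-hand side, being a $\lambda$-derivative of order one of objects bounded in ${\cal M}(\R^d)\hookrightarrow\W{-\malpha}{s'}_{loc}$ (using $\alpha_k\ge1$ and a Sobolev embedding, since $\W{\malpha}{s}$ embeds into continuous functions when $s$ is large, so its dual contains measures) converges strongly in the required negative-order space after passing to the subsequence extracting the limit profile. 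I would invoke the ``relaxed'' form of Theorem \ref{main-result} quoted in the remark following it (boundedness of $(u_n)$ plus strong precompactness of the right-hand side) rather than the convergence-to-zero form.

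Applying Theorem \ref{main-result} then gives that $\int_{-M}^M\rho(\lambda)v_n(\mx,\lambda)\,d\lambda$ is strongly precompact in $\Ldl\Omega$ for every $\rho\in\Ldc\R$. The final step is the standard Panov argument recovering strong precompactness of $(u_n)$ itself: passing to a subsequence one obtains a Young-measure (or $\nu_\mx$) representation of the weak-$\ast$ limit of $({\rm sgn}(\lambda-u_n(\mx)))$; strong precompactness of all the averages forces the limiting measure $\nu_\mx$ to be (a.e. $\mx$) a single Dirac mass $\delta_{u(\mx)}$, whence $u_n\to u$ in $\Ldl\Omega$ along the subsequence, and since this holds for every subsequence the full sequence is precompact. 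I expect the main obstacle to be bookkeeping around the discontinuous coefficients and the singular part $\gamma^s$ of the defect measure: one must check that all the ``lower-order'' terms produced by the entropy inequality genuinely land in $\W{-\malpha}{s'}_{loc}$ and are strongly precompact there (the measure terms are the delicate ones, requiring $\W{\malpha}{s}\hookrightarrow\CCB$, hence a lower bound on $s$, hence an upper bound relating $r$ and the dimension), and that the non-degeneracy \eqref{pgnl} is exactly the hypothesis \eqref{kingnl} transported through the anisotropic scaling — the homogeneity $A(\lambda^{1/\alpha_1}\xi_1,\dots)=\lambda A(\mxi,\ldots)$ holding with $\alpha_k\in\{1,2\}$ is what makes $\Pd$ the correct manifold.
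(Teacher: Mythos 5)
Your overall strategy is the paper's: pass to the kinetic functions ${\rm sgn}(u_n(\mx)-\lambda)$, which are bounded in $\Lb{\Omega\times\R}$ and hence lie in $\Ld{[-M,M];\pL s\Omega}$ for the exponent $s>2$ dictated by $2/s+1/r=1$; recognise the $\lambda$-differentiated entropy relation as an instance of \eqref{main-sys} with $m=1$, $\alpha_k=1$ for $k\leq l$ and $\alpha_k=2$ for $k>l$, so that \eqref{pgnl} is exactly the non-degeneracy condition \eqref{kingnl} on the anisotropic manifold $\Pd$; invoke the relaxed (precompactness) form of Theorem \ref{main-result}; and recover $u_n$ from the velocity averages. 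All of this matches the paper.

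The genuine gap is in the right-hand side, which is where the paper spends essentially all of its effort. You assert that the entropy-defect terms ``converge strongly (after the standard reduction to a single limiting profile) in the appropriate negative-order space,'' but Theorem \ref{main-result} requires strong precompactness of $G_n$ in $\Ldl{\R;\Wl{({\sf -1, -2})}q\Omega}$, i.e.\ jointly in $\lambda$, not merely for each fixed $\lambda$ along a $\lambda$-dependent subsequence. The paper obtains this in three steps: (i) the Schwartz theorem turns the entropy inequality \eqref{p5} into the equation \eqref{p6} with $G_n(\cdot,\lambda)$ a locally uniformly bounded Radon measure; (ii) the compact embedding of bounded measures gives precompactness in $\Wl{({\sf -1, -2})}q\Omega$, $q<d/(d-1)$, for each fixed $\lambda$; and (iii) --- the step your proposal omits entirely --- the modulus-of-continuity hypothesis \eqref{uc} yields the equicontinuity estimate \eqref{1/2}, $|\langle G_n(\cdot,\lambda_1)-G_n(\cdot,\lambda_2),\varphi\rangle|\leq C\,w(|\lambda_1-\lambda_2|)\,\nor{\varphi}{\WW{({\sf 1, 2})}{q'}}$ uniformly in $n$, which is what allows a single diagonal subsequence over a countable dense set of $\lambda$'s to be upgraded to convergence for \emph{every} $\lambda$ and then, by dominated convergence, to convergence in $\Ldl{\R;\Wl{({\sf -1, -2})}q\Omega}$. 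You never use \eqref{uc}, and without some such uniform-in-$n$ continuity in $\lambda$ this step fails. A smaller point: the closing Young-measure argument is an unnecessary detour (and ``precompactness of the averages forces $\nu_\mx$ to be a Dirac mass'' is not a self-standing implication but a consequence of the conclusion); the paper simply takes $\rho=\chi_{[-M,M]}$ and uses the identity $\int_{-M}^{M}{\rm sgn}(u_n(\mx)-\lambda)\,d\lambda=2u_n(\mx)$ to pass directly from precompactness of the averages to precompactness of $(u_n)$.
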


\begin{proof}

To prove the theorem, remark first that, according to the Schwartz
theorem \cite[Theorem I.V]{Sw}, for every $\lambda\in \R$ we can rewrite
\eqref{p5} as

\begin{align}
\label{p6}
&\dv \Bigl({\rm sgn}(u_n(\mx) - \lambda)\big(\vf(\mx, u_n(\mx)) - \vf(\mx, \lambda)\big)\Bigr) \\&-
\dv \dv  \Bigl({\rm sgn}(u_n(\mx) - \lambda)\big(\mB(\mx, u_n(\mx)) - \mB(\mx, \lambda)\big)\Bigr) \nonumber\\
&= G_n(\mx,\lambda)
\nonumber
\end{align}  where $G_n(\cdot,\lambda)\in {\cal M}(\Omega
)$ are Radon measure on
$\Omega$, locally uniformly bounded with respect to $n$.
According to \cite[Theorem 1.6]{Evans} (see also
\cite[Proposition 7]{pan_jms}), the sequence of measures
$(G_n(\cdot,\lambda))$ is strongly precompact in
$\Wl{({\sf -1, -2})}q\Omega$ for each $q\in \oi 1{\frac{d}{d-1}}$.
Furthermore, for every $\varphi\in
C_c^{{\sf 1,2}}(\Omega)$, we have according to \eqref{uc}
\begin{align}
\label{1/2}
&|\langle
G_n(\cdot,\lambda_1)-G_n(\cdot,\lambda_2), \varphi \rangle|
\\&
=\int_{\Omega}\Big|\Big({\rm sgn}(u_n\!-\!\lambda_1)\big(\mB(\cdot,u_n)\!-\! \mB(\cdot,\lambda_1)\big)\!-\!{\rm sgn}(u_n-\lambda_2)\big(\mB(\cdot,u_n)\!-\! \mB(\cdot,\lambda_2)\bigr) \Bigr)\cdot(\nabla\otimes\nabla) \varphi \Big| d\mx
 \nonumber\\&
\qquad +\int_{\Omega}\Big|\Big({\rm sgn}(u_n\!-\!\lambda_1)\big(\vf(\cdot,u_n)\!-\! \vf(\cdot,\lambda_1)\big)\!-\!{\rm sgn}(u_n\!-\!\lambda_2)\big(\vf(\cdot,u_n)\!-\!\vf(\cdot,\lambda_2)\big)\Bigr)\cdot\nabla \varphi \Big|d\mx
\nonumber
 \\& \leq
C w\left(\big|\lambda_1-\lambda_2)\big|\right) \nor\varphi{\WW{({\sf 1, 2})}{q'}},
\nonumber
\end{align}
for a constant $C$ independent of $n$ (it
depends only on $f$, $\mB$, and $\sigma$). Indeed, according to \eqref{uc} it holds
\begin{align*}
&\big|{\rm sgn}(u-\lambda_1)(\vf(\mx, u)- \vf(\mx, \lambda_1)-{\rm sgn}(u-\lambda_2)(\vf(\mx, u)- \vf(\mx, \lambda_2) \big| \quad  \\
 &\leq \begin{cases}
\aps{ \vf(\mx, \lambda_1)-\vf(\mx, \lambda_2)},  &(u-\lambda_1)(u-\lambda_2)\geq 0\\
\aps{ \vf(\mx, u)-\vf(\mx, \lambda_1)} +  \Apslr{\vf(\mx, u)-\vf(\mx, \lambda_2) },  &(u-\lambda_1)(u-\lambda_2)\leq 0
\end{cases}    \\
&\leq\;2 w\left(\big|\lambda_1-\lambda_2)\big|\right) \big|\sigma(\mx)\big|,
\end{align*}
and similarly for $\vf$ replaced by $\mB$,
from where \eqref{1/2} immediately follows.

Take now a countable dense subset $D$ of $\R$ and for every
$\lambda_m\in D$ denote by $G(\cdot,\lambda_m)\in {\cal M}(\Omega)$
such that $G_n(\cdot,\lambda_m)\dstr G(\cdot,\lambda_m)$ strongly in
$\Wl{({\sf -1, -2})}q\Omega$  along a
subsequence.  Since $D$ is countable, we can choose the same subsequence (which we denote the same as the original one) for every
$\lambda_m\in D$. Now, we extend
$G(\cdot,\lambda)$, $\lambda\in D$, by continuity on entire $\R$: for
every $\lambda\in \R$, we choose a sequence $(\lambda_m)$ from $D$
converging to $\lambda$ and define for every $\varphi\in C_c^{{\sf 1,2}}(\Omega)$:
\begin{equation}
\label{wd}
\langle G(\cdot,\lambda), \varphi \rangle:=\lim\limits_{m\to \infty} \langle  G(\cdot,\lambda_m), \varphi \rangle.
\end{equation} The latter is well defined  since for any $\lambda_{1},\lambda_{2}\in D$ and any $\eps>0$ one can find an $n>0$ such that
\begin{align*}
&\Big| \langle G(\cdot,\lambda_{1})-G(\cdot,\lambda_{2}), \varphi \rangle \Big| \leq \\&
\Big| \langle G_n(\cdot,\lambda_{1})-G(\cdot,\lambda_{1}), \varphi\rangle \Big|+
\Big| \langle G_n(\cdot, \lambda_{1})-G_n(\cdot,\lambda_{2}), \varphi \rangle \Big|
+\\& | \langle G_n(\cdot,\lambda_{2})-G(\cdot,\lambda_{2}), \varphi \rangle \Big|\leq
\Big(\eps+C w\left(\big|\lambda_1-\lambda_2)\big|\right)+\eps\Big)\nor\varphi{\WW{({\sf 1, 2})}{q'}} .
\end{align*}
From here, the Cauchy criterion will provide properness of \eqref{wd}. Furthermore, since $G(\cdot,\lambda_m)$ are Radon measures, the functional $G(\cdot, \lambda)$ is also a Radon measure.

Using the same arguments, it is not difficult to prove that for every $\lambda \in \R$,
$$
G_n(\cdot,\lambda)\to G(\cdot,\lambda) \ \ {\rm in} \ \
\Wl{({\sf -1, -2})}q\Omega.
$$

According to the Lebesgue dominated convergence theorem, we
conclude from the latter that
\begin{equation}
\label{uvjet51} G_n\to G \ \ {\rm in} \ \
\Ldl{\R;\Wl{({\sf -1, -2})}q\Omega}.
\end{equation}

By finding derivative of \eqref{p6} with respect to $\lambda$, we
reach to (the kinetic formulation of \eqref{scldf}; see \cite{PC})

\begin{align*}
&\dv \Bigl(h_n(\mx,\lambda)\partial_\lambda\vf(\mx,\lambda)\Bigr) -
\dv \dv  (h_n(\mx,\lambda) \partial_\lambda\mB(\mx, \lambda)) = -\pa_\lambda
G_n(\mx,\lambda)
\end{align*} where $h_n(\mx,\lambda)={\rm sgn}(u_n(\mx)-\lambda)$, and this is the special case of equation
\eqref{main-sys}. From here, we see that, due to Remark 16, the convergence
\eqref{uvjet51}, and the genuine nonlinearity conditions
\eqref{pgnl}, the sequence $(\varphi h_n)$ satisfies conditions of
Theorem \ref{main-result} (see also Remark 15). Thus it
follows that $(\int_{-M}^{M} h_n(\mx,\lambda)d\lambda)$ is
strongly precompact in $\Ldl\Omega$.
Since
$$
\int_{-M}^{M} h_n(\mx,\lambda)d\lambda=2
u_n(\mx),
$$ we conclude that $(u_n)$ is strongly $\Ldl\Omega$ precompact
itself.
\end{proof}

{\bf Acknowledgement} Darko Mitrovi\'c is engaged as a part time
researcher at the University of Bergen in the frame of the project
"Mathematical and Numerical Modeling over Multiple Scales" of the
Research Council of Norway whose support we gratefully acknowledge.

The work presented in this paper was also supported in part by the Ministry of Science, Education and Sports of the Republic of Croatia (project 037-0372787-2795), as well as by the DAAD project {\it Center of Excellence for Applications of Mathematics}.

\end{document}